\documentclass[letterpaper, 11 pt, conference]{ieeeconf}  
                                                          

                                                          
\overrideIEEEmargins                                      

\usepackage[utf8]{inputenc}

\usepackage{graphicx}
\usepackage{amsfonts}	
\usepackage{amssymb}
\usepackage{mathtools}	
\usepackage{mathabx}
\usepackage{midfloat}
 \usepackage{stfloats}
\usepackage{algorithm}

\usepackage{amscd}										
\usepackage{booktabs}										
\usepackage{graphics} 										
\usepackage{epsfig} 										
\usepackage{contour}										
\usepackage{bm}
\usepackage{times} 											
\usepackage{rotating}										
\usepackage{cite} 									
\usepackage{listings}										
\usepackage{xcolor}

\usepackage{url}
\usepackage{array}

\lstset{language=Matlab,basicstyle=\footnotesize\ttfamily,
	frame=lines, columns=fullflexible,keepspaces=true,
	keywordstyle=\color{black}\bfseries,
	commentstyle=\color{darkgray},
	stringstyle=\color{black},}

\newcommand{\blue}[1]{{\textcolor{black}{#1}}}

\DeclareMathAlphabet{\mathpzc}{OT1}{pzc}{m}{it}
\DeclareMathAlphabet{\mathcal}{OMS}{cmsy}{m}{n}

\newtheorem{lemma}{Lemma}
                   						
\newtheorem{remark}{Remark}
\newtheorem{theorem}{Theorem}
\newtheorem{assumption}{Assumption}

\usepackage{cases}
\usepackage{tikz}				
\usetikzlibrary{arrows,shapes,backgrounds,calc,positioning,patterns}
\usepackage{balance}

\newcommand{\qedwhite}{\hfill \ensuremath{\Box}}

\newcommand{\vect}[1]{\boldsymbol{#1}}

\DeclareMathOperator{\Dom}{Dom}	
\DeclareMathOperator{\col}{col}

\hyphenation{op-tical net-works semi-conduc-tor}


\newcommand{\setof}[1]{\left\{#1\right\}} 

\newcommand{\braces}[1]{\left(#1\right)}
\newcommand{\angles}[1]{\langle#1\rangle}
\newcommand{\norm}[1]{\|#1\|}


\usepackage{pifont}

\usepackage{nicefrac}

\usepackage[nolist]{acronym}
\usepackage{hyphenat}

\allowdisplaybreaks

\title{{On modelling and stabilizability of voltage controlled piezoelectric material\\ }} 


\author{Matthijs C. de Jong and Jacquelien M. A. Scherpen 
		\thanks{ M.C. de Jong and J.M.A. Scherpen are with Jan C. Wilems Center for Systems and Control, ENTEG, Faculty of Science and Engineering, University of Groningen, Nijenborgh 4, 9747 AG Groningen, the Netherlands (email: \{  matthijs.de.jong, j.m.a.scherpen\}@rug.nl).}
	}

\begin{document}

\maketitle

    \begin{abstract}
      
       In this paper, we present a new piezoelectric actuator and piezoelectric composite model and show the well-posedness of these systems. Furthermore, we show that the piezoelectric composite is stabilizable for certain system parameters.  In this work, we also review several piezoelectric beams, actuators, and composite models and provide improved definitions of the different electromagnetic considerations, i.e. fully dynamic electromagnetic field, quasi-static electric field, and the static electric field assumption.
        
    \end{abstract}

    \begin{keywords}
        Modelling, piezoelectric beam, piezoelectric actuator, piezoelectric composite, Maxwell's equations, electromagnetic considerations, voltage control, current control through the boundary
    \end{keywords}

\section{Introduction}

\begin{acronym}
    \acro{1D}{one\hyp{}dimensional}
    \acro{2D}{two\hyp{}dimensional}
    \acro{A.S.}{Assymptotically Stabilizable}
    \acro{BIBO}{Bounded Input Bounded Output}
    \acro{CbI}{Control by interconnection}
    \acro{EB}{Euler\hyp{}Bernoulli}
    \acro{EBBT}{Euler\hyp{}Bernoulli beam theory}
    \acro{IBP}{integration by parts}
    \acro{IDA}{Interconnection and Damping Assignment}
    \acro{PBC}{Passivity\hyp{}based control}
    \acro{PD}{Proportional-Derivative}
    \acro{PDE}{Partial Differential Equation}
    \acro{PDEs}{Partial Differential Equations}
    \acro{pH}{port-Hamiltonian}
    \acro{PI}{Proportional-Integral}
    \acro{PID}{Pro-portional\hyp{}Integral\hyp{}Derivative}
    \acro{PZT}{lead zirconate titanate}
    \acro{TBT}{Timoshenko beam theory}
\end{acronym}

 A piezoelectric actuator is a piece of piezoelectric material sandwiched between two layers of electrodes. An electric stimulus, such as voltage, charge, or current, can compress or elongate the actuator in one or more directions. A specific type of piezoelectric actuator is the piezoelectric beam, where an electric stimulus acting on the transverse axis incurs deformation in the longitudinal direction. By bonding a piezoelectric actuator onto the surface of a mechanical substrate, the deformation of the actuator incurs shear stress in the substrate, which curves the composition. We refer to a mechanical substrate with one or more piezoelectric actuators as a piezoelectric composite, see Fig \ref{ch2:fig:piezoelectriccomposite} and is useful in high precision applications. Piezoelectric beams and composites are considered to be capacitor dielectrics, see for instance \cite{preumont_piezoelctric2006a}.\\
 
        	\begin{figure}
    		\centering
    		\includegraphics[width=\columnwidth]{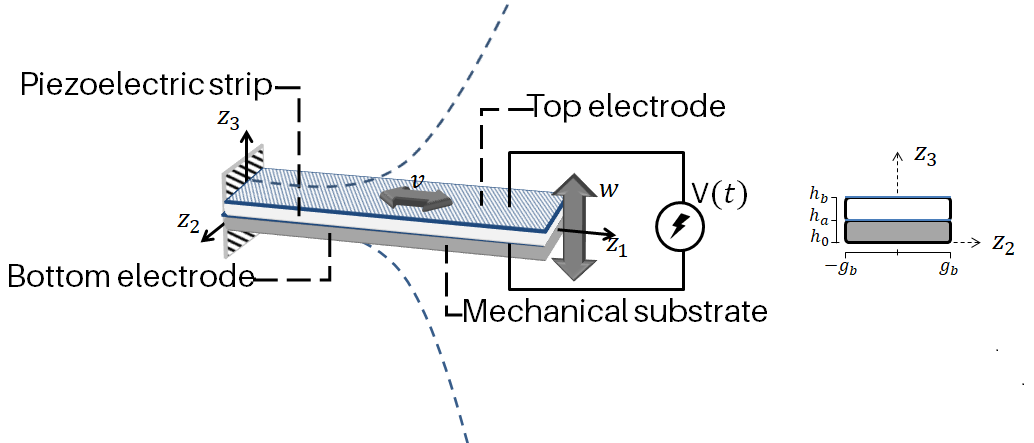}
    		\caption[Voltage-actuated piezoelectric composite]{The piezoelectric actuator consists of a piezoelectric strip, including the top and bottom electrodes. Bonding the piezoelectric actuator to a mechanical substrate results in a piezoelectric composite, which can deflect longitudinally $v$ and transversely $w$ due to the shear stresses between the two layers.}
    		\label{ch2:fig:piezoelectriccomposite}
    	\end{figure}
                      
    	From a control perspective, roughly two types of applications exist for piezoelectric actuators and composites, i.e. vibration control and shape control. The former has applications in acoustic devices as transducer \cite{Ralib2015} or suppression of vibrations in mechanical applications \cite{Samikkannu2002vibrationdamp}. The latter type envelopes applications, such as flexible wings \cite{CHUNG2009136}, inflatable space structures \cite{voss2010port}, and deformable mirrors \cite{Radzewicz04}, where high precision positioning is of great importance. Often in applications, one side of the composite has a specific function (e.g. reflecting and focusing electromagnetic waves), such as inflatable space structures and deformable mirrors.

		The dynamics describing the behaviour of a piezoelectric composite originate from continuum mechanics (mechanical domain) and Maxwell's equations (electromagnetic domain) and result in a set of \ac{PDE}. \ac{PDE} models with different structures and properties are derived by changing the mechanical or electromagnetic domain assumptions. Several different types of beam theory can be assumed for the mechanical domain. Moreover, non-linear phenomena can be incorporated. For instance, it is well-known that piezoelectric material shows complex non-linear behaviour \cite{Yang_piezoanalysis2006}, such as hysteretic behaviour caused by saturation of the polarization through high electric field strengths applied to the piezoelectric material. Moreover, drifting behaviour in strain measurements has been observed, which is known as creep. In \cite{Kugi2006}, a method which compensates for the ferroelectric hysteretic behaviour by calculating an inverse operator based on the Prandtl-Ishlinski theory is proposed.  Another approach, based on passivity, can be found in \cite{Gorbet1997Preisachstab,Gorbet2001PassivitybasedPreisach,Gorbet2003closedloopPreisach} and \cite{Valadkhan2007SIAMpassivityPreisach} for Preisach hysteresis models. In \cite{Kuhnen2007},  the inverse operator based on the Prandtl-Ishlinski theory is adjusted to compensate for the non-linear creep phenomena. Using such a systematically calculated inverse operator mitigates the non-linear behaviour, and the linear behaviour for piezoelectric beams is recovered, paving the way for stabilizability and other controller designs.\\
  
		Different assumptions on the treatment of Maxwell's equations for the electromagnetic domain lead to different dynamics and coupling of the electric, magnetic, and mechanical quantities. These assumptions are often categorised as a \textit{static electric field}, a \textit{quasi-static electric field}, or a \textit{fully dynamic electromagnetic field}. Recent efforts show that the treatment of the electromagnetic domain and choice of input severely alters the controllability and stabilizability properties of piezoelectric beams and composites \cite{MenOSIAM2014,VenSSIAM2014,Ozkan2019comp}\\
  
		The traditional choice of actuation is voltage actuation. Voltage-actuated linear infinite-dimensional piezoelectric beam models are exactly controllable and exponentially stabilizable in the case of a static or quasi-static electric field, see \cite{komornik2005}, \cite{KapitonovMM07}, and \cite{LASIECKA2009167}. In \cite{MenOSIAM2014}, the fully dynamic voltage actuated beam model is shown to be asymptotically stabilizable for almost all system coefficients and exponentially stabilizable for a small set of system coefficients. In \cite{OzerMCSS2015}, it has been shown that the fully dynamical beam is polynomially stabilizable when certain conditions on the physical coefficients are satisfied.\\
	    
        	\subsection{Contributions}
        	In this work, we focus on (linear) voltage-controlled piezoelectric beams, actuators and composites. We derive the model of a linear, fully dynamic piezoelectric actuator model, where we employ a similar modelling approach applied to a piezoelectric beam in \cite{MenOSIAM2014}. \blue{The difference between the piezoelectric actuator model and the piezoelectric beam model in \cite{MenOSIAM2014} is the incorporation of the rotational inertia, resulting in the more general piezoelectric actuator model in our case. The derived piezoelectric actuator model is more useful for developing piezoelectric composite models as we show it can be interconnected with a mechanical substrate in a straightforward manner. Furthermore, the models presented in this work differ from the non-linear (current-controlled) piezoelectric actuator and composite from \cite{VenSSIAM2014}, which are written in the port-Hamiltonian formalism \cite{port-HamiltonianIntroductory14}, through different assumptions for the mechanical domain, i.e. in \cite{VenSSIAM2014} the non-linear \ac{TBT} is used. In contrast to our linear voltage-controlled systems.} \\
                    
                The setup of this paper allows us to present a comprehensive modelling framework, which allows for a comparison of several models of piezoelectric beams and actuators from the literature in both classical and \ac{pH} formalism. \blue{Therefore, we use a hybrid formalism by writing the underlying \ac{PDEs} and aligning the variables with the energy variables of the systems to allow for a comparison of the dynamics and corresponding energy functionals.} Furthermore, we pay special attention to the treatment of the electromagnetic domain. We present the following key contributions:
                \begin{enumerate} 
                    \item we derive a more comprehensive, fully dynamic electromagnetic, piezoelectric actuator model, which takes the rotational inertia into account as opposed to the piezoelectric beam in \cite{MenOSIAM2014}, and show the system is well-posed. 
                    \item we show that the linearized fully dynamic piezoelectric beam in \cite{MenOSIAM2014} is a special case of the fully dynamic piezoelectric actuator we derive in this work through centroidal coordinates. Furthermore, we show that the fully dynamic nonlinear piezoelectric actuator and composite presented in \cite{VenSSIAM2014} exploits the so-called \textit{boundary ports} of the port-Hamiltonian (pH) framework to obtain a current input. We show that the model is part of the family of voltage-controlled piezoelectric actuators, and for the linear case, the model corresponds to our developed fully dynamic electromagnetic, voltage-controlled actuator model.
                     \item we derive a well-posed voltage-actuated, fully dynamic electromagnetic, piezoelectric composite model by interconnecting the piezoelectric actuator with a purely mechanical substrate.
                    \item {we show that the fully dynamic voltage-controlled piezoelectric composite is useful and asymptotically stabilizable.}
                    \blue{ \item we amplify the different electromagnetic assumptions, i.e. the \textit{static electric field}, the \textit{quasi-static electric field}, and the \textit{fully dynamic electromagnetic field} by giving special attention to the effect of these assumptions on the dynamical equations and corresponding interpretation.}
                \end{enumerate}
    
            \subsection{Outline}
                This paper is organised as follows. In Section \ref{ch2:sec:piezoelectric actuator}, we derive the fully dynamic piezoelectric actuator model and show that the model is well-posed. In Section \ref{ch2:sec:Piezoelectricbeams}, we reduce \blue{our} piezoelectric actuator to a piezoelectric beam and compare piezoelectric beams presented in the literature. Furthermore, we present an overview of the different electromagnetic considerations and argue for a different notion for the quasi-static electric field assumption as presented in \cite{MenOSIAM2014}. In Section \ref{ch2:sec:Piezoelectricactuators}, we make a comparison of the well-posed piezoelectric actuator derived in Section \ref{ch2:sec:piezoelectric actuator} to the findings in \cite{VenSSIAM2014}. Subsequently, in Section \ref{ch2:sec:Piezoelectriccomposites}, we proceed with the proposed piezoelectric actuator model and derive the mathematical model for a fully dynamic electromagnetic, piezoelectric composite. \blue{In Section \ref{ch2:sec:Stabz}, we show that the voltage-controlled piezoelectric composites are asymptotically stabilizable through the feedback of an electric nature and provide some illustrative simulations
                to accompany the stabilizability results in Section \ref{ch2:sec:simulations}.}
                Finally, in Section \ref{ch2:sec:DF}, we give some concluding remarks and future research directions.
    

    \section{Modelling a voltage actuated piezoelectric actuator with a fully dynamic electromagnetic field}\label{ch2:sec:piezoelectric actuator}
    
        Let us consider a volume $\Omega$ with length $\ell$, width $2g_b$, and thickness $h=h_b-h_a$ in the Cartesian coordinate system $z_1,z_2,z_3$ with unit vectors $(\vect{z_1},\vect{z_2},\vect{z_3})$, as depicted in  Fig \ref{ch2:fig:piezoelectriccomposite}. The body of $\Omega$ can be defined as follows
        \begin{align*}
             \Omega:=&\{ (z_1,z_2,z_3)\ \mid  0\leq z_1\leq \ell, \\
                 &\ -g_b\leq z_2 \leq g_b,\ h_a\leq z_3 \leq h_b  \} ,
        \end{align*}
        where we assume that the length $\ell$ is significantly larger than the width and thickness of the volume. We denote the longitudinal deformation along $z_1$ by $v(z_1,t)$, the transverse deformation along $z_3$ by $w(z_1,t)$, and the rotation of the beam given by $\phi(z_1,t)$. From now on, we omit the dependency of time $t$, when it is clear from the setting. We consider the \ac{1D} linear \ac{EBBT}, see \cite{carrera2011beam} for instance, which leads to the displacement-vector
        \begin{align}\label{ch2:eq:EB_displacement}
            \vect{u}=\begin{bmatrix}u_1 \\ u_2 \\ u_3 \end{bmatrix}= \begin{bmatrix} v(z_1) - \vect{z_3}\frac{\partial}{\partial z_1}w(z_1)\\0\\w(z_1)\end{bmatrix},
        \end{align}
        The strain $\vect{\epsilon}$, in the material is a result of (local) displacements, and is denoted by the vector 
        \begin{align}\label{ch2:eq:general_strain}
            \vect{\epsilon}=\begin{bmatrix} \epsilon_{11} & \epsilon_{22} & \epsilon_{33} & \epsilon_{23} & \epsilon_{31} & \epsilon_{21} \end{bmatrix}^T
        \end{align}
        composed of the axial strains $\epsilon_{ii}$ and shear strains $\epsilon_{ij}$ for $i\neq j$ for $i,j\in 1,\dots,3$. The subscripts $i,j$ are aligned with the coordinate system $z_1,z_2,z_3$. Using the linear strain expression  
        \begin{align}\label{ch2:eq:linengineering_strain}
            \epsilon_{ij}=\frac{1}{2}\braces{\frac{\partial u_i}{\partial z_j}+\frac{\partial u_j}{\partial z_i}}.
        \end{align} 
        on the \ac{EB} displacement vector \eqref{ch2:eq:EB_displacement}, we obtain the \ac{EB} beam strain
        \begin{align}\label{ch2:eq:EB_strain}
            \epsilon_{11}=\frac{\partial v}{\partial z_1}(z_1)-\vect{z_3}\frac{\partial^2 w}{\partial z_1^2}(z_1).
        \end{align}
        for the \ac{1D} piezoelectric actuator. The stress vector $\vect{\sigma}$ is given by
        \begin{align}\label{ch2:eq:general_stress}
            \vect{\sigma}=\begin{bmatrix} \sigma_{11} & \sigma_{22} & \sigma_{33} & \sigma_{23} & \sigma_{31} & \sigma_{21} \end{bmatrix}^T
        \end{align}
        and is related to \eqref{ch2:eq:general_strain} by the stiffness matrix $C^E$ in Hooke's law $\vect{\sigma}=C^E\vect{\epsilon}$ \cite{carrera2011beam}.\\
        
        To model the fully dynamic electromagnetic, piezoelectric actuator, we consider two types of energy; the mechanical energies, composed of the kinetic co-energy $(T^\ast)$ and the potential mechanical energy $(V)$, and the electromagnetic energies, composed of the electric energy $(\mathcal{E})$ and magnetic energy $(\mathcal{M})$. Let $\rho$ denote the mass density of the material and let the vectors $\vect{D}$, $\vect{E}$, $\vect{B}$ and $\vect{H}$ denote the electric displacement, the electric field, the magnetic field, and the magnetic field intensity, respectively. Then, the aforementioned energies are given as follows,
        \begin{subequations}\label{ch2:eq:energies_all}
            \begin{align}
                  T^\ast&=\frac{1}{2}\int_\Omega \rho(\dot{\vect{u}} \cdot \dot{\vect{u}})~d\Omega, \label{ch2:eq:energy_mechkinetic}\\
                  V&=\frac{1}{2}\int_\Omega \vect{\sigma}\cdot\vect{\epsilon}~d\Omega,\label{ch2:eq:energy_mechpotential}\\
                    \mathcal{M}&=\frac{1}{2}\int_\Omega \vect{H}\cdot\vect{B}\ d\Omega,\label{ch2:eq:Energy_Magnetic} \\
                    \mathcal{E}&=\frac{1}{2}\int_\Omega  \vect{D}\cdot \vect{E}\ d\Omega, \label{ch2:eq:Energy_Electric}
            \end{align}
        \end{subequations}
        where we omit the spatial dependency from the notation (on $z_1$). To describe the behaviour of the electromagnetic field we require Maxwell's equations for dielectrics and the piezoelectric constitutive relations for permeable material \cite{eom2013maxwell,tiersten1969linear}.
        Let $\mu$ represent magnetic permeability of the material and denote the volume charge density by $\sigma_v$. 
        Then, Maxwell's equations  \cite{eom2013maxwell} are given by the four laws;
                \begin{subequations}\label{ch2:eq:Maxwells_full}
            	    \begin{align}
                		\nabla\times \vect{E}&=-\frac{\partial \vect{B}}{\partial t},  \label{ch2:eq:Faraday'sLaw} \\
                	    \nabla\cdot\vect D&=\sigma_v, \label{ch2:eq:GausssElectric} \\
                	    \nabla\times \vect{H}&=\frac{\partial \vect D}{\partial t},\label{ch2:eq:Ampereslaw} \\
                	    \nabla\cdot\vect{B}&=0,  \label{ch2:eq:GausssMagnetic}
            	    \end{align}
            	\end{subequations}
            	corresponding with Faraday's law \eqref{ch2:eq:Faraday'sLaw} for time varying magnetic fields, Gauss's law \eqref{ch2:eq:GausssElectric} for electric fields, Ampere's law \eqref{ch2:eq:Ampereslaw} that describes the generation of a magnetic field by current densities, and Gauss's law \eqref{ch2:eq:GausssMagnetic} of magnetism, respectively. Note that the (free) current density $\vect{J}_f$ is neglected in \eqref{ch2:eq:Ampereslaw} as is common for dielectrics. However, bound charges are present as they are attached to the atoms in dielectric material \cite{IEEEstandardPiezo,tiersten1969linear,eom2013maxwell}. For the linear isotropic material properties, we consider additionally the two constitutive relations
            	\begin{subequations}\label{ch2:eq:constrel_Maxwell}
            	\begin{align}
                	\vect{D}&=\varepsilon\vect{E},\label{ch2:eq:constrel_MaxwellD}\\
                	\mu\vect{H}&=\vect{B},\label{ch2:eq:constrel_MaxwellH}
            	\end{align}
            	\end{subequations}
          In this work we consider piezoelectric beams, actuators and composites that are actuated by applying an electric potential difference (voltage $V(t)$) across the $z_3$ direction of the piezoelectric layer such that $E_3\neq 0$ and $E_1=E_2=0$. Therefore, \eqref{ch2:eq:Maxwells_full} and \eqref{ch2:eq:constrel_Maxwell} are reduced to scalar equations with $E_3, D_3, H_2$ and  $B_2$ as the remaining nonzero physical quantities by straightforward calculations. In particular, for Ampere's law \eqref{ch2:eq:Ampereslaw} and \eqref{ch2:eq:constrel_MaxwellH} we obtain the scalar equation 
         \begin{align}\label{ch2:eq:ampereslaw_scalar}
            \mu\dot{D}_3&=\frac{\partial}{\partial z_1}B_2.
        \end{align}
        Let's define the charge $q(z_1)$ similarly to \cite{MenOSIAM2014}, by
        \begin{align}\label{ch2:eq:integrated_charge}
             q(z_1)&:=\int_0^{z_1}{D}_3(\xi)d\xi,
        \end{align}
        with
        \begin{align}\label{ch2:eq:equation_qz}
             \frac{\partial q}{\partial z_1}(z_1)=D_3(z_1).
        \end{align}
        Then, integrating  \eqref{ch2:eq:ampereslaw_scalar} from $0$ to $z_1$ provides the relation between the charge $q$ and magnetic field $B_2$ as follows,
        \begin{align}\label{ch2:eq:integrated_charge_rateofhange}
            \mu \dot{q} (z_1) = B_2(z_1).
        \end{align}
        The expressions \eqref{ch2:eq:equation_qz} and \eqref{ch2:eq:integrated_charge_rateofhange} are used in writing the energies \eqref{ch2:eq:energies_all} in scalar form applicable to the considered voltage actuated piezoelectric actuator. The explicit forms of the energies \eqref{ch2:eq:energies_all} of a voltage-actuated piezoelectric actuator are as follows, 
        \begin{subequations}\label{ch2:eq:energies_all_voluit}
            \begin{align}
                  T^\ast&=\frac{1}{2}\rho \int_\Omega \dot{u}_1^2+\dot{u}_3^2~ d\Omega \nonumber \\ 
                    &=\frac{1}{2}\rho\int_\Omega \dot{v}^2+\vect{z_3}^2\dot{w}^2_{z_1}-2\vect{z_3}\dot{v}\dot{w}_{z_1}+\dot{w}^2~d\Omega. \label{ch2:eq:energy_mechkinetic_scalar}\\
                V&=\frac{1}{2}\int_\Omega \sigma_{11}\epsilon_{11}~d\Omega \nonumber\\
                    &=\frac{1}{2}\int_\Omega \sigma_{11}\braces{v_{z_1}-\vect{z_3} w_{z_1 z_1}}~d\Omega,\label{ch2:eq:energy_mechpotential_scalar}\\
                     \mathcal{M}^\ast&=\frac{1}{2}\int_{\Omega}{\frac{1}{\mu}B_2^2}d\Omega \nonumber \\
            		&=\frac{1}{2}\int_{\Omega}{\mu\dot{q}^2}d\Omega,\label{ch2:eq:Energy_Magnetic_scalar} \\
                \mathcal{E}&=\frac{1}{2}\int_\Omega  q_{z_1} {E}_3\ d\Omega, \label{ch2:eq:Energy_Electric_scalar}
            \end{align}
        \end{subequations}
        where we made use of \eqref{ch2:eq:constrel_Maxwell}, \eqref{ch2:eq:equation_qz}, and \eqref{ch2:eq:integrated_charge_rateofhange}.

            \begin{remark}\label{ch2:re:analogy}
                   In \eqref{ch2:eq:energies_all_voluit}, we indicate the (kinetic) co-energies with the use of an asterisk. The co-energy is the dual of energy and is \blue{useful} for system elements that store energy. For linear systems, the energy and co-energy are \blue{equal}; however, they are expressed in different variables. For non-linear systems, the co-energy and energy can differ. 
                   Here, we use the notion of co-energy as it is useful to differentiate between the magnetic and electric energies and their duals in relation to the Lagrangian equation. Many multi-physical systems have been derived with the use of a Lagrangian equation as the difference between the (kinetic) co-energy and potential energy. \blue{The (kinetic) co-energies are  associated with a flow (i.e. the rate of change of the generalised displacements), whereas the potential energy is associated with the generalised displacement \cite{JeltsemaMDM2009,JeltsemaSchaft_transmissionline}.}\\
            \end{remark}
        
        The coupling between the mechanic and electromagnetic domains is facilitated by the linear piezoelectric constitutive relations 
            \begin{alignat}{1}\label{ch2:eq:Constitutive_relations}
            	\left[\begin{array}{c}
                    	\vect{\sigma}\\
                    	\vect{D}
                	\end{array}\right]= & \left[\begin{array}{cc}
                    	C^E & -e\\
                    	e^T & \varepsilon
                    	\end{array}\right]\left[\begin{array}{c}
                    	\vect{\epsilon}\\
                    	\vect{E}
                	\end{array}\right],
            	\end{alignat}
        where $C^E$ is the $6\times6$ stiffness matrix, $e$ denotes the $6\times 3$ piezoelectric constants matrix, and $\varepsilon$ is the $3\times 3$ diagonal permittivity matrix, see for instance \cite{tiersten1969linear,IEEEstandardPiezo,preumont_piezoelctric2006a}. For the \ac{1D} piezoelectric {actuator} the piezoelectric constitutive relations \eqref{ch2:eq:Constitutive_relations} are reduced to the scalar equations
            \begin{align}\label{ch2:eq:preliminaries_piezoelectricConstitutiveRelaitons_scalar}
                \begin{bmatrix}\sigma_{11} \\ D_3
                \end{bmatrix}=\begin{bmatrix}
                C^E_{11} & -\gamma\\ 
                \gamma & \tfrac{1}{\beta}
                \end{bmatrix}\begin{bmatrix}\epsilon_{11}\\E_3
                \end{bmatrix},
            \end{align}
        where $\gamma:=e_{31}$ and $\beta:=\varepsilon_{33}^{-1}$ denote respectively the piezoelectric constant and impermittivity.\\

        The mathematical model of the voltage-actuated piezoelectric actuator is derived by applying Hamilton's principle to a definite integral that contains the Lagrangian $\mathcal{L}$ and the impressed forces $W$ that allows the actuation of the piezoelectric actuator by means of an applied voltage. Therefore, we
        define the definite integral 
        \begin{align}\label{ch2:eq:def_integrap_heli}
            \mathcal{J}& :=\int_{t_{0}}^{t_{1}} \mathcal{L} + W ~d t.
        \end{align}
        
        Following Remark \ref{ch2:re:analogy}, and with the generalised displacements coordinate for the electromagnetic part: the charge {\eqref{ch2:eq:integrated_charge}}, we point out that the magnetic energy \eqref{ch2:eq:Energy_Magnetic_scalar} behaves as the (kinetic) co-energy and the electric energy \eqref{ch2:eq:Energy_Electric_scalar} behaves as the potential energy for the electromagnetic domain. For the voltage-controlled piezoelectric actuator, we use the Lagrangian 
        \begin{align}\label{ch2:eq:lagrangian}
            \mathcal{L} = (T^\ast + \mathcal{M}^\ast) - (V + \mathcal{E}), 
        \end{align}
        to derive the dynamical model. The Lagrangian \eqref{ch2:eq:lagrangian} is similar to the Lagrangian in \cite{MenOSIAM2014}. However, the explicit energy expressions differ since we additionally consider the rotational inertia $I_0$, which is typical for the more general piezoelectric actuator model. Incorporating the rotational inertia's $I_0$ provides an advantage for deriving the model of a piezoelectric composite, which we illustrate in Section \ref{ch2:sec:Piezoelectriccomposites}.\\
        
        From  \eqref{ch2:eq:preliminaries_piezoelectricConstitutiveRelaitons_scalar}, we see that the coupling of the mechanical and electromagnetic domain is done on the one hand through the mechanical stress $\sigma_{11}$ present in \eqref{ch2:eq:energy_mechpotential_scalar} and on the other hand through the electric field $E_3$ present in \eqref{ch2:eq:Energy_Electric_scalar}. With the use of the energies \eqref{ch2:eq:energies_all_voluit} and the Lagrangian \eqref{ch2:eq:lagrangian}, we note that the coupling between the mechanical and electromagnetic domain is present in both the mechanical potential energy $\mathcal{V}$ and the electric energy $\mathcal{E}$. \\
        
        To \blue{actuate} the piezoelectric actuator, we define the external work $W$ next. Therefore, let $\sigma_A$ denote the surface charge and $\tilde{V}$ the electric scalar potential. Then, the external force \cite{ballas2007piezoelectric} is written as follows,
        \begin{align}\label{ch2:eq:impressedforces}
          	W=-\int_\Omega\sigma_A\frac{\partial \tilde{V}}{\partial z_3}d\Omega,
        \end{align}
        and allow the actuation of the piezoelectric actuator by means of an electric potential $V$.\\
        
        To derive the piezoelectric actuator model, we write the definite integral \eqref{ch2:eq:def_integrap_heli} as follows,
          \begin{align}\label{ch2:eq:definite_integral}
              \begin{split}
                        \mathcal{J}=& \int_{t_{0}}^{t_{1}} \frac{1}{2} \int_{0}^{l} \rho\left(A \dot{v}^{2}+I \dot{w}_{z_{1}}^{2}+2 I_{0} \dot{w}_{z_{1}} \dot{v}+A \dot{w}^{2}\right)\\
                    &+\mu A \dot{q}^{2} -{C}\left(A v_{z_{1}}^{2}+I w_{z_1 z_{1}}^{2}-2 I_{0} v_{z_{1}} w_{z_{1} z_{1}}\right)\\
                    &-\gamma \beta\left(A v_{z_1}-I_{0} w_{z, z_{1}}\right) q_{z_{1}} \\
                    &-A{\beta} q_{z_{1}}^{2}+\gamma\beta\left(A v_{z_{1}}-I_{0} w_{z_1 z_{1}}\right) q_{z_{1}} \\
                    &- A q_{z_{1}} V(t) ~d z_{1}~ d t,
                \end{split}
        \end{align}
        where we made use of the Lagrangian \eqref{ch2:eq:lagrangian}, the energies \eqref{ch2:eq:energies_all_voluit}, and work \eqref{ch2:eq:impressedforces}. Furthermore, we made use of the reinforced compliance $C:=C^E_{11}+\gamma^2\beta$ and cross-sections and inertias defined as follows,
        \begin{align}\label{ch2:eq:crossseactionalA}
                	A&:=\int_{h_a}^{h_b}\int_{-g_b}^{g_b}dz_2dz_3=2g_b(h_b-h_a), \nonumber \\
                	I&:=\int_{h_a}^{h_b}\int_{-g_b}^{g_b}{z}_3^2dz_2dz_3=\frac{2}{3}g_b(h_b^3-h_a^3),\\ 
                	I_0&:=\int_{h_a}^{h_b}\int_{-g_b}^{g_b}\vect{z}_3dz_2dz_3=\left[g_b{z}_3^2\right]_{h_a}^{h_b}=g_b(h_b^2-h_a^2). \nonumber
        	\end{align}
        Application of Hamilton's principle \cite{lanczos1970variational} to \eqref{ch2:eq:definite_integral} and setting the variation of admissible displacements $\setof{v,w_{z_1},q}$ to zero, yields the  dynamical model describing the behaviour of a voltage actuated piezoelectric actuator with fully dynamic electromagnetic piezoelectric actuator as follows,
        \begin{subequations}\label{ch2:eq:PDE-piezoactuator}
            \begin{align}\label{ch2:eq:PDE-piezoactuator-pde}
                \begin{split}
                     \rho\left(A \ddot{v}-I_{0} \ddot{w}_{z_{1}}\right)&=C\left(A {v}_{z_1 z_{1}}-I_{0} w_{z_1 z_1 z_{1}}\right)-{\gamma} \beta A q_{z_1 z_{1}}\\  
                \rho\left(I \ddot{w}_{z_{1}}-I_{0} \ddot{v }\right) &=C\left(I w_{z_1 z_{1} z_{1}}-I_{0} v_{z_{1} z_{1}}\right)+\gamma \beta I_{0} q_{z_{1} z_{1}} \\
                \mu A \ddot{q} &=\beta A q_{z_1 z_{1}}-\gamma \beta\left(A v_{z_{1} z_{1}}-I_{0} w_{z_1 z_{1} z_{1}}\right)\\
                \end{split}\\
                \begin{split}\label{ch2:eq:PDE-piezoactuator-bcs}
                v(0)=w(0)=w_{z_1}(0)=q(0)=0\\
                 {C}A v_{z_1}(\ell)-C{I_0} w_{{z_1}{z_1}}(\ell)-\gamma\beta A q_{z_1}(\ell)=0\\
                	    CI w_{{z_1}{z_1}}(\ell)-C{I_0} v_{{z_1}}(\ell)+\gamma\beta  I_0  q_{z_1}(\ell)=0\\
                	    A\beta q_{z_1}(\ell)-\gamma\beta(Av_{z_1}-I_0 w_{z_1 z_1})(\ell)=-AV(t),
                \end{split}
            \end{align}
        \end{subequations}
        	on the spatial domain $z_1\in[0,\ell]$ with total energy
         \begin{align}\label{ch2:eq:Hamiltonian_PDE_fd}
                      \mathcal{H}_{1}(t)&=\frac{1}{2} \int_{0}^{\ell}[\rho\left(A \dot{v}^{2}+I \dot{w}_{z_{1}}^{2}-2 I_{0} \dot{v} \dot{w}_{z_{1}}\right)+\mu A \dot{q}^{2} \nonumber\\
                    &+ C\left(A v_{z_{1}}^{2}+I w_{z_{1} z_{1}}^{2}-2 I_{0} v_{z_{1}} w_{z_{1} z_1}\right) \\
                    &\left.+\beta A q_{z_{1}}^{2}-2 \gamma \beta\left(A v_{z_{1}}-I_{0} w_{z_{1}z_{1}}\right) q_{z_{1}}\right] d z_{1}. \nonumber
         \end{align}
        
        We refer to the model given by \eqref{ch2:eq:PDE-piezoactuator} as a piezoelectric actuator, which is comprised of \blue{coupled} stretching and bending equations, respectively line (1,3) and (2).\\
        
        Before we show how this model relates to the existing literature, we show first the well-posedness of the derived piezoelectric actuator model \eqref{ch2:eq:PDE-piezoactuator-pde} with boundary conditions \eqref{ch2:eq:PDE-piezoactuator-bcs}. Therefore, we firstly define the operator associated with the \ac{PDE} \eqref{ch2:eq:PDE-piezoactuator} and show that the operator is, in fact, a generator of a strongly continuous semigroup of contractions, with the use of the Lumer-Philips Theorem \cite{lumer1961}.
        
            \begin{theorem}{\bf (Lumer-Phillips theorem \cite{lumer1961})}\label{ch2:theorem:Lumer-Phillips}
            The closed and densely defined operator $A$ generates a strongly continuous semigroup of contractions $T(t)$ on $X$, if and only if both $A$ and its adjoint $A^*$ are dissipative, i.e.
            \begin{align}\label{ch2:disspative_operator}
                \begin{matrix}
                        \angles{A \vect{x},\vect{x}}_X&\leq &0  \quad\text{ for all } \quad x\in\Dom(A), \\
                        \angles{A^*{\vect{x}},\vect{x}}_X&\leq &0 \quad \text{ for all }  \quad x\in\Dom(A^\ast).
                \end{matrix}        \end{align}          \qedwhite
            \end{theorem}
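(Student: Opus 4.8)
The statement is the Hilbert-space form of the Lumer--Phillips theorem, so the plan is to deduce it from the classical Banach-space Lumer--Phillips characterization (equivalently, from Hille--Yosida), treating the two implications separately. For necessity, suppose $A$ generates the contraction semigroup $T(t)$ on $X$. For $\vect{x}\in\Dom(A)$ the map $t\mapsto\|T(t)\vect{x}\|_X^2=\angles{T(t)\vect{x},T(t)\vect{x}}_X$ is differentiable with derivative $2\,\mathrm{Re}\,\angles{AT(t)\vect{x},T(t)\vect{x}}_X$ (using $\tfrac{d}{dt}T(t)\vect{x}=AT(t)\vect{x}$ for $\vect{x}\in\Dom(A)$); since $\|T(t)\vect{x}\|_X\le\|\vect{x}\|_X$ for all $t\ge 0$ this derivative is nonpositive at $t=0$, which gives $\angles{A\vect{x},\vect{x}}_X\le 0$. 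For the adjoint I would use that on the Hilbert (hence reflexive) space $X$ the adjoint family $T(t)^{*}$ is again a $C_0$-semigroup, it is contractive because $\|T(t)^{*}\|=\|T(t)\|\le 1$, and its infinitesimal generator is $A^{*}$; the same differentiation argument applied to $T(t)^{*}$ then yields $\angles{A^{*}\vect{x},\vect{x}}_X\le 0$ for all $\vect{x}\in\Dom(A^{*})$.

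For sufficiency, assume $A$ is closed, densely defined, and both $A$ and $A^{*}$ are dissipative. By the classical Lumer--Phillips theorem it suffices to verify that $A$ is dissipative (given) and that $\ran(I-A)=X$. Dissipativity of $A$ yields $\|(I-A)\vect{x}\|_X\ge\|\vect{x}\|_X$ for all $\vect{x}\in\Dom(A)$, so $I-A$ is injective and, since $A$ is closed, has closed range. To see the range is dense, take any $\vect{y}$ orthogonal to $\ran(I-A)$; then $\angles{A\vect{x},\vect{y}}_X=\angles{\vect{x},\vect{y}}_X$ for every $\vect{x}\in\Dom(A)$, which by the definition of the adjoint means $\vect{y}\in\Dom(A^{*})$ and $(I-A^{*})\vect{y}=0$; dissipativity of $A^{*}$ then forces $\|\vect{y}\|_X\le\|(I-A^{*})\vect{y}\|_X=0$, so $\vect{y}=0$. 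Hence $\ran(I-A)$ is simultaneously closed and dense, so it equals $X$, and the classical Lumer--Phillips theorem gives that $A$ generates a strongly continuous contraction semigroup.

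The step I expect to be the main obstacle is the duality argument in the necessity direction: that the adjoint of a $C_0$-semigroup is again strongly continuous with generator $A^{*}$ is false for general Banach spaces (one must restrict to the sun-dual), and it is invoked here precisely because $X$ is a Hilbert space and therefore reflexive. A cleaner, self-contained alternative that sidesteps adjoint semigroups is to argue entirely with resolvents: dissipativity of $A$ together with $\ran(I-A)=X$ is equivalent to the Hille--Yosida bound $\|\lambda(\lambda I-A)^{-1}\|\le 1$ for all $\lambda>0$, and since $\big((\lambda I-A)^{-1}\big)^{*}=(\lambda I-A^{*})^{-1}$ the same bound transfers verbatim to $A^{*}$; this renders both implications symmetric and rests only on Hille--Yosida.
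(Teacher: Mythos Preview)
The paper does not prove this statement: Theorem~\ref{ch2:theorem:Lumer-Phillips} is quoted as a classical result with a citation to \cite{lumer1961}, and the \verb|\qedwhite| simply marks the end of the quoted statement, not of a proof. There is therefore nothing in the paper to compare your argument against.

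That said, your proposal is a correct and standard proof of the Hilbert-space form of Lumer--Phillips. The necessity direction via differentiating $t\mapsto\|T(t)\vect{x}\|_X^2$ is the textbook argument, and you are right that the passage to $A^{*}$ hinges on reflexivity so that $T(t)^{*}$ is again a $C_0$-semigroup with generator $A^{*}$; in the paper's setting $X$ is a real Hilbert space, so this is unproblematic. The sufficiency direction---bounded-below plus closedness gives closed range, orthogonal complement of the range lies in $\ker(I-A^{*})$, and dissipativity of $A^{*}$ kills that kernel---is exactly the classical reduction to the range condition in the Banach-space Lumer--Phillips theorem. Your alternative resolvent route via $\big((\lambda I-A)^{-1}\big)^{*}=(\lambda I-A^{*})^{-1}$ is also sound and arguably cleaner, but either way you are supplying a proof the paper deliberately omits.
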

            Next, we establish the well-posedness of the piezoelectric actuator in the sense of semigroup theory \cite{CurtainZwart1995introduction}. \\
                
            Without loss of generality, let the length of the beam be $\ell=1$, with the spatial variable $z\in \left[0,\ell\right]$, and define the space $$H^1_0(0,1):=\setof{f\in H^1(0,1)\mid f(0)=0},$$ with $H^1(0,1)$ denoting the first order Sobolev space, moreover, let $L^2(0,1)$ denote the space of square-integrable functions. Inspired by the energy functional \eqref{ch2:eq:Hamiltonian_PDE_fd}, define the linear space
        	\begin{align*}
        	\mathcal{X}=\left\{\vect{x}\in H^1_0(0,1)\times H_0^1(0,1)\times H^1_0(0,1)\right.\\
        	\left.\times L^2(0,1) \times L^2(0,1) \times L^2(0,1) \right\}
        	\end{align*}
        	and inner product
            \begin{align}
                    \begin{split}
                                &\langle x, y\rangle_{\mathcal{X}}:=\\
                                &\left\langle\left[\begin{array}{l}
                                x_{1} \\
                                x_{2} \\
                                x_{3}
                                \end{array}\right],\left[\begin{array}{ccc}
                                C A & -C I_{0} & -\gamma \beta A \\
                                -C I_{0} & C I & \gamma \beta A \\
                                -\gamma \beta A & \gamma \beta I_{0} & \beta A
                                \end{array}\right]\left[\begin{array}{l}
                                y_{1} \\
                                y_{2} \\
                                y_{3}
                                \end{array}\right]\right\rangle_{H_{0}^{1}} \\
                                &+\left\langle \left[\begin{array}{l}
                                x_{4} \\
                                x_{5} \\
                                x_{6}
                                \end{array}\right],\left[\begin{array}{ccc}
                                \rho A & -\rho I_{0} & \\
                                -\rho I_{0} & \rho I & \\
                                & & \mu A
                                \end{array}\right]\left[\begin{array}{l}
                                y_{4} \\
                                y_{5} \\
                                y_{6}
                                \end{array}\right]\right\rangle_{{L}^{2}}\\
                                &=\int_{0}^{1}\left[C A x_{1}^{\prime} y_{1}^{\prime}+C I x_{2}^{\prime} y_{2}^{\prime}-C I_{0}\left(x_{1}^{\prime} y_{2}^{\prime}+x_{2}^{\prime} y_{1}^{\prime}\right)\right. \\
                                & \quad  -\gamma \beta A\left(x_{1}^{\prime} y_{3}^{\prime}+x_{3}^{\prime} y_{1}^{\prime}\right)+\gamma \beta I_{0}\left(x_{2}^{\prime} y_{3}^{\prime} +x_{3}^{\prime} y_{2}^{\prime}\right) \\
                                &\quad + \beta A x_3'y_3'+\rho A x_{4} y_{4}+\rho I x_{5} y_{5}-p I_{0}\left(x_{4} y_{5}+x_{5} y_{4}\right) \\
                                & \quad\left.+\mu A x_{6} y_{6}\right] d z_{1},
                    \end{split}
            \end{align}
        	where the prime indicates the spatial derivative with respect to $z_1$. The inner product $\angles{.\ ,.}_\mathcal{X}$ induces the norm $\norm{\vect{x}}^2_{\mathcal{X}}=\angles{\vect{x},\vect{x}}_\mathcal{X}=2\mathcal{H}_{1}(t)$ on $\mathcal{X}$, see \eqref{ch2:eq:Hamiltonian_PDE_fd}.
        	For simplicity, denote the spatial variable $z:=z_1$, let $\partial_{z}^i:=\frac{\partial^i}{\partial z^i}$, and define $\vect{x}:=\begin{bmatrix}v & w_{z} & q & \dot{v} & \dot{w}_{z} & \dot{q}	\end{bmatrix}^T$ to be the state. 
        		Then, the operator
        		\begin{equation}\label{ch2:eq:opA_FD}
            		\begin{gathered}
                    	\mathcal{A}:\Dom(\mathcal{A})\subset \mathcal{X}\rightarrow\mathcal{X},\\
                        	\mathcal{A}=\left[
                        	\begin{array}{ccc|ccc}
                        	& & & 1 & &\\
                        	& & & & 1 &\\
                        	& & & & & 1\\ \hline
                        \frac{C}{\rho}\partial_z^2 & & -\frac{\gamma\beta}{\rho}\partial_z^2&  & &\\
                        	& \frac{C}{\rho}\partial_z^2 & & & & \\
                        	-\frac{\gamma\beta}{\mu}\partial_z^2 & \frac{\gamma \beta I_0}{\mu A}\partial_z^2 & \frac{\beta}{\mu}\partial_z^2 & & &
                        	\end{array}
                        	\right]
                	\end{gathered}
        	\end{equation}
        	with 
        	\begin{align}\label{ch2:eq:domain_A}
                    	&\Dom(\mathcal{A})=\left\{\vect{x}\in \mathcal{X}\mid \right. \nonumber\\
                     &~\left.CA x_1'(1)-C{I_0}x_2'(1)-\gamma A x_3'(1)=0, \right.\nonumber\\
                	    &~\left.CI x_2'(1))-C{I_0}x_1'(1)+\gamma I_0  x_3'(1)=0,\right.\\ 
                	    &~\left. \beta A x_3'(1)-\gamma\beta (A  x_1'(1)-I_0 x_2'(1))=-A V(t).\right\}\nonumber
        	\end{align}
            is densely defined in $\mathcal{X}$ and describes the behaviour of a voltage-actuated piezoelectric actuator. To establish the well-posedness of the operator \eqref{ch2:eq:opA_FD} in the sense of semigroup theory, we set $V(t)=0$ and make use of the following Lemma. 
            %
            	\begin{lemma}\label{ch2:lem:FD_adjoint}
        		The adjoint $\mathcal{A}^\ast$ of the operator $\mathcal{A}$, defined in \eqref{ch2:eq:opA_FD}, is skew-adjoint. More specifically,			\begin{align*}
        		\mathcal{A}^\ast=-\mathcal{A},
        		\end{align*}
        		with $\Dom(\mathcal{A})=\Dom(\mathcal{A}^\ast)$.\\
        		\begin{proof}
        		    For any $\vect{x}=\begin{bmatrix}x_1 & \dots &x_6\end{bmatrix}^T$ and $\vect{y}=\begin{bmatrix}y_1 & \dots &y_6\end{bmatrix}^T$ $\in\Dom(\mathcal{A})$ we have,
        		    \begin{align}
            		    \begin{split}
                            \langle A \vect{x}, \vect{y}\rangle_{y}&= \int_{0}^{1}\left[C A y_{1}^{\prime} x_{4}^{\prime}-C I_{0} y_{2}^{\prime} x_{4}^{\prime}-\gamma \beta A y_{3}^{\prime} x_{4}^{\prime}\right.\\
                            &+C I y_{2}^{\prime} x_{5}^{\prime}-C I_{0} y_{1}^{\prime} x_{5}^{\prime}+\gamma \beta I_{0} y_{3}^{\prime} x_{5}^{\prime}\\
                            &+\beta A y_3^{\prime} x_{6}^{\prime}-\gamma \beta A y^{\prime}_1 x_{6}^{\prime}+\gamma \beta I_{0} y_{2}^{\prime} x_{6}^{\prime} \\
                            &+C A y_{4} x_{1}^{\prime \prime}-C I_{0} y_{5} x_{1}^{\prime \prime}-\gamma \beta A y_{4} x_{3}^{\prime \prime} \\
                            &+\gamma \beta I_{0} y_{5} x_{3}^{\prime \prime}+C I y_{5} x_{2}^{\prime \prime}-C I_{0} y_{4} x_{2}^{\prime \prime}\\
                            &+\left.\beta A y_{6} x_{3}^{\prime \prime}-\gamma \beta A y_{6} x_{1}^{\prime \prime}+\gamma \beta I_{0} y_{6} x_{2}^{\prime \prime}\right] d z_{1}\\
                            =-&\int_{0}^{1}\left[\left(C A y_{4}^{\prime}-C I_{0} y_{5}^{\prime}-\gamma \beta A y_{6}^{\prime}\right) x_{1}^{\prime}\right.\\
                            &+\left(C I y_{5}^{\prime}-C I_{0} y_{4}^{\prime}+\gamma \beta I_{0} y_{6}^{\prime}\right) x_{2}^{\prime} \\
                            &+\left(\beta A y_{6}^{\prime}-\gamma \beta\left(A y_{4}^{\prime}-I_{0} y_{5}^{\prime}\right)\right) x_{3}^{\prime}\\
                            &+\left(C A y_{1}^{\prime \prime}-C I_{0} y_{2}^{\prime \prime}-\gamma \beta A y_{3}^{\prime \prime}\right) x_{4} \\
                            &+\left(C I y_{2}^{\prime \prime}-C I_{0} y_{1}^{\prime \prime}+\gamma \beta I_{0} y_{3}^{\prime \prime}\right) x_{5} \\
                            &+\left(\beta A y_{3}^{\prime \prime}-\gamma \beta\left(A y_{1}^{\prime \prime}-I_{0} y_{2}^{\prime \prime}\right) x_{6}\right] d z_{1}\\
                            &+\left[\left(C A y_{1}^{\prime}-C I_{0} y_{2}^{\prime}-\gamma \beta A y_{3}^{\prime}\right) x_{4}\right. \\
                            &+\left(C I y_{2}^{\prime}-C I_{0} y_{1}^{\prime}+\gamma \beta I_{0} y_{3}^{\prime}\right) x_{5} \\
                            &+\left(\beta A y_{3}^{\prime}-\gamma \beta A y_{1}^{\prime}+\gamma \beta I_{0} y_{2}^{\prime}\right) x_{6} \\
                            &+\left(C A x^{\prime}_1+C I_{0} x_{2}^{\prime}-\gamma \beta A x_{3}^{\prime}\right) y_{4} \\
                            &+\left(C I x_{2}^{\prime}-C I_{0} x_{1}^{\prime}+\gamma \beta I_{0} x_{3}^{\prime}\right) y_{5} \\
                            &\left.+\quad\left(\beta A x_{3}^{\prime}-\gamma \beta\left(A x_{1}^{\prime}-I_{0} x_{2}^{\prime}\right)\right) y_{6}\right]_{0}^{1}\\
                            &=\langle \vect{x},-\mathcal{A}\vect{y}\rangle_\mathcal{X},
                        \end{split}
        		    \end{align}
        		    where we made use of \ac{IBP}, the domain of $\mathcal{A}$ \eqref{ch2:eq:domain_A}, and imposed the boundary conditions
        		    \begin{align*}
        		        &y_4(0)=y_5(0)=y_6(0)=0,\\
        		        & CA y_1'(1)-C{I_0}y_2'(1)-\gamma A y_3'(1)=0, \\
        		        & CI y_2'(1))-C{I_0}y_1'(1)+\gamma I_0  y_3'(1)=0, \\
        		        &\beta A y_3'(1)-\gamma\beta (A  y_1'(1)-I_0 y_2'(1)=0,
        		    \end{align*}
        		    for $\mathcal{A}^\ast$, similar to the boundary conditions of $\mathcal{A}$. Hence, we conclude the skew-symmetry of $\mathcal{A}^\ast$, i.e. $\mathcal{A}^\ast=-\mathcal{A}$ and $\Dom(\mathcal{A}^\ast)=\Dom(\mathcal{A})$.
        		\end{proof}
        	\end{lemma}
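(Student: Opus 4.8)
The plan is to check directly that $\mathcal{A}$ is skew-symmetric for the energy inner product $\langle\cdot,\cdot\rangle_{\mathcal{X}}$ and then to upgrade this to the claimed skew-adjointness. First I would fix $\vect{x},\vect{y}\in\Dom(\mathcal{A})$ with $V(t)=0$ and expand $\langle\mathcal{A}\vect{x},\vect{y}\rangle_{\mathcal{X}}$ using the explicit inner product: the first three entries of $\mathcal{A}\vect{x}$, namely $(x_4,x_5,x_6)$, enter through the $H^1_0$-block weighted by $CA,CI,CI_0,\gamma\beta A,\gamma\beta I_0,\beta A$, while the last three entries, which are the second-order differential expressions in $(x_1,x_2,x_3)$ read off from \eqref{ch2:eq:opA_FD}, enter through the $L^2$-block weighted by $\rho A,\rho I,\rho I_0,\mu A$. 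Because the stiffness weights $C,\gamma\beta$ in $\mathcal{A}$ are exactly those in the $H^1_0$-weight matrix and the inertial weights $\rho,\mu$ cancel against the bottom block of $\mathcal{A}$, the result is a sum of bilinear terms of the two schematic shapes $c\,x_i'y_j'$ and $c\,x_i''\,y_{j+3}$ with $i,j\in\{1,2,3\}$ and constants $c$ drawn from the same list; this symmetric pattern is nothing but the statement that $\|\vect{x}\|_{\mathcal{X}}^2=2\mathcal{H}_1(t)$ in \eqref{ch2:eq:Hamiltonian_PDE_fd}.

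Next I would integrate by parts once in every term. In the $L^2$-block terms each $x_i''\,y_{j+3}$ becomes $-x_i'\,y_{j+3}'$ plus a boundary value, and the resulting interior integrals assemble precisely into the $H^1_0$-part of $\langle\vect{x},-\mathcal{A}\vect{y}\rangle_{\mathcal{X}}$; symmetrically, in the $H^1_0$-block terms each $x_i'\,y_j'$ becomes $-x_i\,y_j''$ plus a boundary value, and these assemble into the $L^2$-part of $\langle\vect{x},-\mathcal{A}\vect{y}\rangle_{\mathcal{X}}$. Since $(x_1,x_2,x_3),(y_1,y_2,y_3)\in H^1_0(0,1)$, and since $\mathcal{A}\vect{x},\mathcal{A}\vect{y}\in\mathcal{X}$ forces $(x_4,x_5,x_6),(y_4,y_5,y_6)\in H^1_0(0,1)$ together with $H^2$-regularity of $(x_1,x_2,x_3),(y_1,y_2,y_3)$, all boundary contributions at $z=0$ vanish. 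At $z=1$ the surviving boundary terms group as $y_4(1)$ times the first Neumann-type combination $CAx_1'(1)-CI_0x_2'(1)-\gamma\beta Ax_3'(1)$ of \eqref{ch2:eq:domain_A}, $y_5(1)$ times the second, $y_6(1)$ times the third, plus the mirror image with $\vect{x}$ and $\vect{y}$ interchanged; the boundary clauses of $\Dom(\mathcal{A})$ with $V(t)=0$ (and the same clauses for $\vect{y}$) annihilate all of them, leaving $\langle\mathcal{A}\vect{x},\vect{y}\rangle_{\mathcal{X}}=\langle\vect{x},-\mathcal{A}\vect{y}\rangle_{\mathcal{X}}$.

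It remains to pin down the domain. Skew-symmetry gives $-\mathcal{A}\subseteq\mathcal{A}^{\ast}$, so I only need $\Dom(\mathcal{A}^{\ast})\subseteq\Dom(\mathcal{A})$. For $\vect{y}\in\Dom(\mathcal{A}^{\ast})$ one has $\langle\mathcal{A}\vect{x},\vect{y}\rangle_{\mathcal{X}}=\langle\vect{x},\mathcal{A}^{\ast}\vect{y}\rangle_{\mathcal{X}}$ for every $\vect{x}\in\Dom(\mathcal{A})$; testing first against $\vect{x}$ with all components compactly supported in $(0,1)$ (which lies in $\Dom(\mathcal{A})$ once $V=0$) and reading the same integration-by-parts identity in the distributional sense forces $(y_1,y_2,y_3)\in H^2$, $(y_4,y_5,y_6)\in H^1_0(0,1)$ and $\mathcal{A}^{\ast}\vect{y}=-\mathcal{A}\vect{y}$; allowing $\vect{x}$ with arbitrary traces $x_4(1),x_5(1),x_6(1)$ then forces $\vect{y}$ to satisfy the three boundary conditions of \eqref{ch2:eq:domain_A} with $V=0$, so $\Dom(\mathcal{A}^{\ast})=\Dom(\mathcal{A})$ and $\mathcal{A}^{\ast}=-\mathcal{A}$. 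I expect the only real difficulty to be bookkeeping — there are nine cross-coupling coefficients and roughly a dozen boundary terms — and the conceptual reason there is no analytic obstruction is exactly that $\langle\cdot,\cdot\rangle_{\mathcal{X}}$ was built as twice the Hamiltonian, so every boundary term produced by the integrations by parts is matched term-for-term by a clause of $\Dom(\mathcal{A})$, with only the standard one-dimensional trace and Sobolev embedding facts needed.
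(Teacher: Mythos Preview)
Your proposal is correct and follows essentially the same route as the paper: expand $\langle\mathcal{A}\vect{x},\vect{y}\rangle_{\mathcal{X}}$ in the energy inner product, integrate by parts, and use the boundary clauses of $\Dom(\mathcal{A})$ (with $V=0$) to annihilate the boundary terms at $z=1$. Your final paragraph, which pins down $\Dom(\mathcal{A}^{\ast})$ by first testing against compactly supported $\vect{x}$ and then against $\vect{x}$ with free traces $x_4(1),x_5(1),x_6(1)$, is in fact more complete than the paper's argument, which only verifies the skew-symmetry identity for $\vect{x},\vect{y}\in\Dom(\mathcal{A})$ and then simply ``imposes'' the same boundary conditions on the adjoint without arguing that every $\vect{y}\in\Dom(\mathcal{A}^{\ast})$ must satisfy them.
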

            
            Now we are able to establish the well-posedness of the voltage-actuated piezoelectric actuator in the absence of control.
        	\begin{theorem}\label{ch2:theorem:contractionFD} 
        	    The operator $\mathcal{A}$, defined in \eqref{ch2:eq:opA_FD}, generates a semigroup of contractions,     satisfying $\norm{T(t)}\leq 	1$ on $\mathcal{X}$.
        	\end{theorem}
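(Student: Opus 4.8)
The plan is to apply the Lumer--Phillips theorem (Theorem~\ref{ch2:theorem:Lumer-Phillips}) to $\mathcal{A}$ with the control switched off, $V(t)=0$, so that $\Dom(\mathcal{A})$ in \eqref{ch2:eq:domain_A} consists of states satisfying homogeneous boundary conditions. Three ingredients are needed: $\mathcal{A}$ densely defined, $\mathcal{A}$ closed, and both $\mathcal{A}$ and $\mathcal{A}^\ast$ dissipative on $\mathcal{X}$.

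Density of $\Dom(\mathcal{A})$ in $\mathcal{X}$ was already recorded below \eqref{ch2:eq:domain_A}: the defining relations only constrain the boundary traces of $x_1',x_2',x_3'$ at $z=1$ (in addition to the interior regularity $x_1,x_2,x_3\in H^2$, $x_4,x_5,x_6\in H^1_0$ forced by $\mathcal{A}\vect{x}\in\mathcal{X}$), and smooth fields vanishing near both endpoints — which are dense in each factor of $\mathcal{X}$ — satisfy all of them trivially. For closedness I would simply invoke Lemma~\ref{ch2:lem:FD_adjoint}, which states $\mathcal{A}^\ast=-\mathcal{A}$ with $\Dom(\mathcal{A}^\ast)=\Dom(\mathcal{A})$; since the adjoint of a densely defined operator is always closed, $\mathcal{A}=-\mathcal{A}^\ast$ is closed.

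For dissipativity, the point is that skew-adjointness forces $\langle\mathcal{A}\vect{x},\vect{x}\rangle_\mathcal{X}$ to be purely imaginary: for every $\vect{x}\in\Dom(\mathcal{A})=\Dom(\mathcal{A}^\ast)$,
\begin{align*}
\langle\mathcal{A}\vect{x},\vect{x}\rangle_\mathcal{X}=\langle\vect{x},\mathcal{A}^\ast\vect{x}\rangle_\mathcal{X}=-\langle\vect{x},\mathcal{A}\vect{x}\rangle_\mathcal{X}=-\overline{\langle\mathcal{A}\vect{x},\vect{x}\rangle_\mathcal{X}},
\end{align*}
so $\operatorname{Re}\langle\mathcal{A}\vect{x},\vect{x}\rangle_\mathcal{X}=0\leq0$, and the identical computation with $\mathcal{A}^\ast$ in place of $\mathcal{A}$ gives $\operatorname{Re}\langle\mathcal{A}^\ast\vect{x},\vect{x}\rangle_\mathcal{X}=0\leq0$. (Equivalently, one may re-run the integration-by-parts computation of Lemma~\ref{ch2:lem:FD_adjoint} with $\vect{y}=\vect{x}$ and observe that all boundary terms vanish under the homogeneous conditions in $\Dom(\mathcal{A})$; this is exactly $\tfrac{d}{dt}\mathcal{H}_1(t)=\langle\mathcal{A}\vect{x},\vect{x}\rangle_\mathcal{X}=0$, i.e. energy conservation of the uncontrolled actuator, which is also a sanity check on \eqref{ch2:eq:PDE-piezoactuator} and the norm $\norm{\vect{x}}^2_\mathcal{X}=2\mathcal{H}_1$.) Both conditions in \eqref{ch2:disspative_operator} then hold, and Theorem~\ref{ch2:theorem:Lumer-Phillips} yields that $\mathcal{A}$ generates a strongly continuous semigroup of contractions $T(t)$ on $\mathcal{X}$, hence $\norm{T(t)}\leq1$.

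I do not expect a real obstacle: the substantive work is in Lemma~\ref{ch2:lem:FD_adjoint}. The one point that deserves care is making sure that lemma truly identifies the \emph{full} adjoint, i.e. that $\Dom(\mathcal{A}^\ast)$ is not strictly larger than $\Dom(\mathcal{A})$ — dissipativity of $\mathcal{A}^\ast$ is only meaningful once that domain equality is established — and, secondarily, checking that the three boundary relations cutting out $\Dom(\mathcal{A})$ are independent constraints on distinct traces so that density is genuinely preserved. With these in hand the conclusion $\norm{T(t)}\leq1$ follows at once.
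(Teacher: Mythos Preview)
Your proposal is correct and follows essentially the same route as the paper: both set $V(t)=0$, invoke Lemma~\ref{ch2:lem:FD_adjoint} for the skew-adjointness $\mathcal{A}^\ast=-\mathcal{A}$, verify dissipativity of $\mathcal{A}$ and $\mathcal{A}^\ast$, and conclude via Lumer--Phillips. The only cosmetic difference is that the paper redoes the integration-by-parts computation explicitly to obtain $\langle\mathcal{A}\vect{x},\vect{x}\rangle_\mathcal{X}=0$, whereas you extract this algebraically from skew-adjointness; your version is slightly cleaner and your remarks on closedness and on the domain equality $\Dom(\mathcal{A}^\ast)=\Dom(\mathcal{A})$ are apt refinements that the paper leaves implicit.
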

        	\begin{proof}
        		The closed and densely defined operator $\mathcal{A}$ satisfies
        		\begin{align}\label{ch2:eq:A_dissipative}
                		&\angles{\mathcal{A}\vect{x},\vect{x}}_\mathcal{X}=\int_{0}^{1} -\left(\left(A x_{1}^{\prime \prime}-C I_{0} x_{2}^{\prime \prime}-\gamma \beta A x_{3}^{\prime \prime}\right) x_{4}\right.\nonumber\\
                        &\quad-\left(C I x_{2}^{\prime \prime}-C I_{0} x_{1}^{\prime \prime}+\gamma \beta I_{0} x_{3}^{\prime \prime}\right) x_{5} \nonumber\\
                        &\quad-\left(\beta A x_{3}^{\prime \prime}-\gamma \beta\left(A x_{1}^{\prime \prime}-I_{0} x_{2}^{\prime \prime}\right) \right) x_{6}\nonumber\\
                        &\quad+\left(\left(A x_{1}^{\prime \prime}-C I_{0} x_{2}^{\prime \prime}-\gamma \beta A x_{3}^{\prime \prime}\right) x_{4}\right.\nonumber\\
                        &\quad+\left(C I x_{2}^{\prime \prime}-C I_{0} x_{1}^{\prime \prime}+\gamma \beta I_{0} x_{3}^{\prime \prime}\right) x_{5} \nonumber\\
                        &\quad+\left(\beta A x_{3}^{\prime \prime}-\gamma \beta\left(A x_{1}^{\prime \prime}-I_{0} x_{2}^{\prime \prime}\right) \right) x_{6}\nonumber\\
                        &\quad+[\left(C A x_{1}^{\prime}-C I_{0} x_{2}^{\prime}-\gamma \beta A x_{3}^{\prime}\right) x_{4} \nonumber\\
                        &\quad+\left(C I x_{2}^{\prime}-C I_{0} x_{1}^{\prime }+\gamma \beta I_{0} x_{3}^{\prime}\right) x_{5} \nonumber\\
                        &\quad+\left(\beta A x_{3}^{\prime}-\gamma \beta\left(A x_{1}^{\prime}-I_{0} x_{2}^{\prime}\right) x_{6}\right]_{0}^{1} \nonumber\\
                        &\quad=0\leq 0
        		\end{align}
            	where we made use of \ac{IBP} and the domain of $\mathcal{A}$. Furthermore, from Lemma \ref{ch2:lem:FD_adjoint}, we have that
        		\begin{align}\label{ch2:eq:Aast_dissipative}
        		    \langle \mathcal{A}^\ast \vect{x},\vect{x}\rangle_\mathcal{X}=\langle -\mathcal{A} \vect{x},\vect{x}\rangle_\mathcal{X}=-\langle \mathcal{A} \vect{x},\vect{x}\rangle_\mathcal{X}=0\leq 0
        		\end{align}
        		\blue{In \eqref{ch2:eq:A_dissipative} and \eqref{ch2:eq:Aast_dissipative}, we have shown that both $\mathcal{A}$ and $\mathcal{A}^\ast$ are dissipative. Hence, with use of the Lumer-Phillips Theorem (Theorem \ref{ch2:theorem:Lumer-Phillips}), we conclude that the operator \eqref{ch2:eq:opA_FD} generates a semigroup of contractions and is therefore well-posed.} 
        	\end{proof}
       
        In the sections \ref{ch2:sec:Piezoelectricbeams}, \ref{ch2:sec:Piezoelectricactuators}, \ref{ch2:sec:Piezoelectriccomposites}, we use the piezoelectric actuator and show how this model relates to the piezoelectric beams, actuators and composites models in the literature, respectively. Furthermore, we will touch upon the different electromagnetic considerations for modelling piezoelectric beams, actuators, and composites. Furthermore, in Sections \ref{ch2:sec:Stabz}, we show the stabilizability results of our model and add some simulations for simulation and control purposes in Section \ref{ch2:sec:simulations}.\\

    \section{Comparison of piezoelectric beams } \label{ch2:sec:Piezoelectricbeams}
    
        In this section, we relate the derived model by \blue{comparing} it to existing piezoelectric beam models. A piezoelectric actuator can either be interconnected with a mechanical substrate to obtain a piezoelectric composite, the piezoelectric actuator can be interconnected at the boundary with its environment for specific applications, or the piezoelectric actuator can be considered on its own by taking centroidal coordinates (i.e. $h_b=-h_a$). The latter ensures that the cross-sectional inertia $I_0\rightarrow 0$, see \eqref{ch2:eq:crossseactionalA}. Taking centroidal coordinates reduces the piezoelectric actuator \eqref{ch2:eq:PDE-piezoactuator} to the fully dynamic electromagnetic piezoelectric beam presented in \cite{MenOSIAM2014} by substituting $I_0=0$ into \eqref{ch2:eq:PDE-piezoactuator-pde} and \eqref{ch2:eq:PDE-piezoactuator-bcs}, respectively. \blue{The bending equations become decoupled from the voltage actuation and the stretching equations \eqref{ch2:eq:PDE-piezobeam} by taking centroidal coordinates, see \cite{MenOMTNS2014} for more detail. The stretching equations remain influenced by the control input $V(t)$ and are as follows,}
        \begin{subequations}\label{ch2:eq:PDE-piezobeam}
            \begin{align}\label{ch2:eq:PDE-piezobeam-pde}
                \begin{split}
                    \rho\ddot{v}&=Cv_{z_1z_1}-\gamma\beta q_{z_1z_1}\\
                    \mu\ddot{q}&=\beta q_{z_1z_1}-\gamma\beta v_{z_1z_1}
                \end{split}\\ \nonumber \\
                \begin{split}\label{ch2:eq:PDE-piezobeam-bcs}
                    v(0)&=q(0)=0\\
                    C v_{z_1}&(1)-\gamma\beta q_{z_1}(1)=0\\
                    \beta q_{z_1}&(1)-\gamma\beta v_{z_1}(1)=-V(t).
                \end{split}    
            \end{align}
        \end{subequations}
        \blue{The decoupling of the bending equations from the stretching equations and voltage-actuation also holds for other beam theories, such as the Timoshenko or Rayleigh beam theories \cite{MenOSIAM2014,MenOACC2014}. Furthermore, in \cite{MenOACC2014}, it is shown that modelling voltage-controlled piezoelectric beams using either the Euler-Bernoulli or Timoshenko beam theory results in exactly the same dynamical equations, due to the decoupling of the stretching and bending equations. The piezoelectric beam model \eqref{ch2:eq:PDE-piezobeam}, is similar to the piezoelectric beam model in \cite{MenOMTNS2014}, which is derived by neglecting the rotational inertia (similar to taking centroidal coordinates) before applying Hamilton's principle to \eqref{ch2:eq:definite_integral}, see \cite{MenOSIAM2014}. The fully dynamic piezoelectric beam \eqref{ch2:eq:PDE-piezobeam} is well-posed and exponentially stabilizable for special cases of the system parameters  \cite{MenOSIAM2014}. For another set of system parameters, the system is strongly stabilizable \cite{MenOSIAM2014,MenOCDC2013}.} In this section, we have shown the relation of the piezoelectric actuator to the piezoelectric beam. In the next section, we continue with the different electromagnetic considerations and their connection to the dynamics of piezoelectric beams and actuators.\\ 
    %
    
        \subsection{Electromagnetic considerations}\label{ch2:sec:electromagneticconsiderations}

        Three considerations exist for the electromagnetic domain: a \textbf{fully dynamic electromagnetic field}, a \textbf{quasi-static electric field}, and a \textbf{static electric field}, which we touch upon in this section. In this order, they take less and less dynamic behaviour of the electromagnetic domain into account. \\
            \subsubsection{The fully dynamic electromagnetic field}            
            For both the voltage actuated piezoelectric actuator \eqref{ch2:eq:PDE-piezoactuator} and piezoelectric beam \eqref{ch2:eq:PDE-piezobeam} a fully dynamic electromagnetic field is assumed. The electromagnetic field is produced by accelerating electric charges and by Maxwell's correction (i.e. $\tfrac{\partial \vect{D}}{\partial t}$) to Ampere's original law, ensuring the continuity equation
            \begin{align}\label{ch2:eq:continuity_eq}   \nabla\cdot\vect{J}=-\frac{\partial \sigma}{\partial t},
            \end{align}
            which amounts to the conversation of charge. 
            \begin{remark}\label{ch2:rem:bound&free_currentdensity}
               The current density is composed of free and bound current densities, i.e.
               \begin{align*}
                   \vect{J}=\vect{J}_f+\vect{J}_b
               \end{align*}
                For dielectric material, all the charges within the material are bound to the atoms. Therefore, the current densities present within piezoelectric material are bound, i.e. $\vect{J}\equiv \vect{J}_b$. It is also possible to actuate a piezoelectric beam or actuator by a current, which allows free charges to flow through the piezoelectric. In that case $\vect{J}_f\neq 0$. However, the scope of this work encompasses voltage-actuated piezoelectric beams, actuators and composites.  
            \end{remark}
            From \eqref{ch2:eq:GausssMagnetic}, it is known that there exist a magnetic vector potential $\vect{A}$ and electric scalar potential $\varphi$ such that $\vect{B}=\nabla\times \vect{A}$ \cite{eom2013maxwell,tiersten1969linear,MenOSIAM2014}. Therefore, through Ampere's law \eqref{ch2:eq:Ampereslaw} and the use of vector identities, the expression for the electric field
            \begin{align}\label{ch2:eq:E-fieldFD}
                E&=-\nabla \varphi -\frac{\partial \vect{A}}{\partial t}
            \end{align}
            is derived. In \eqref{ch2:eq:E-fieldFD}, the dynamic contribution of the magnetic field to the electric field for the case of a fully dynamic electromagnetic case is shown. In piezoelectric beams and actuators, the fully dynamic electromagnetic field is characterised through Faraday's law \eqref{ch2:eq:Faraday'sLaw} by 
            \begin{subequations}
                \begin{align}
                    \frac{\partial B_2}{\partial t}&=\frac{\partial E_3}{\partial z_1} \label{ch2:eq:characterizing_eq_1}  \\ 
                    &=\beta\frac{\partial D_3}{\partial z_1}-\gamma\beta \frac{\partial \epsilon_{11}}{\partial z_1} \label{ch2:eq:characterizing_eq_2} \\
                    \mu A\frac{\partial^2 q}{\partial t^2}&=\beta A \frac{ \partial^2 q }{\partial z_1^2 } -\gamma\beta\braces{A\frac{\partial^2 v}{\partial z_1^2}-I_0\frac{\partial^3 w}{\partial z_1^3}} \label{ch2:eq:characterizing_eq_3},
                \end{align}
            \end{subequations}
            where we made use of $E_1=E_2=0$ in \eqref{ch2:eq:Faraday'sLaw} to obtain \eqref{ch2:eq:characterizing_eq_1}. Furthermore, in \eqref{ch2:eq:characterizing_eq_2}, we made use of \eqref{ch2:eq:preliminaries_piezoelectricConstitutiveRelaitons_scalar}, and in \eqref{ch2:eq:characterizing_eq_3}, we used the relations \eqref{ch2:eq:equation_qz} and \eqref{ch2:eq:integrated_charge} on both sides and integrate  over the cross-section $A$. The characterising expression \eqref{ch2:eq:characterizing_eq_3} recurs in the dynamics of the piezoelectric actuator\eqref{ch2:eq:PDE-piezoactuator} and beam \eqref{ch2:eq:PDE-piezobeam} (with $I_0\downarrow 0$).\\
            {\subsubsection{The quasi-static electric field} Besides the fully dynamic electromagnetic field assumption, a static electric field or quasi-static electric field assumption can be used \cite{IEEEstandardPiezo,tiersten1969linear,eom2013maxwell}. The quasi-static electric field is obtained by restricting the magnetic field intensity to Ampere's original law for magnetism \cite{eom2013maxwell} through, 
            \begin{align}\label{ch2:eq:Amperes-original}
                \nabla\times \vect{H}\equiv 0,
            \end{align}
            such that the electric displacement becomes static, i.e. $\dot{\vect{D}}\equiv0$. However, this does not imply that the electric field $\vect{E}$ nor the magnetic field $\vect{B}$ becomes static. Moreover, \eqref{ch2:eq:GausssElectric} remains valid as opposed to the definition of the quasi-static electric in \cite{MenOSIAM2014}. We argue that there are implicit magnetization and polarization properties present in the piezoelectric material \cite{tiersten1969linear,IEEEstandardPiezo}. Let's denote the magnetic-field by $\vect{M}$, the electric susceptibility by $\chi$, and define the polarization $\vect{P}\equiv \varepsilon_0\chi \vect{E}$, such that
            \begin{subequations}
                \begin{align}
                    \vect{D}&=\varepsilon_0 \vect{E}+\vect{P}, \label{ch2:eq:polarization}\\
                    \vect{B}&=\mu_0\braces{\vect{H}+\vect{M}},\label{ch2:eq:magnetization}
                \end{align}
            \end{subequations}
            where $\varepsilon_0$ and $\mu_0$ denote the respective permittivity and permeability in free space. Furthermore, note that $\varepsilon=\epsilon_0(1+\chi)$. Then, by substituting \eqref{ch2:eq:polarization} and \eqref{ch2:eq:magnetization} in both sides of \eqref{ch2:eq:Ampereslaw}, we obtain 
             \begin{align}\label{ch2:eq:proofapoint_continuityeq}
                 \nabla\times\braces{\mu_0 \vect{ B}}-\underbrace{\nabla\times\vect{M}}_{\vect{J}_M}&=\braces{1+\chi}^{-1}\frac{\partial \vect{D}}{\partial t}+\underbrace{\frac{\partial \vect{P}}{\partial t}}_{\vect{J}_P},
             \end{align}
            with $\vect{J}_M$ and $\vect{J}_P$ denoting the respective magnetic and polarizing current densities, which are bound electric current densities, i.e. $\vect{J}_b=\vect{J}_M+\vect{J}_P$. Taking the divergence on both sides of \eqref{ch2:eq:proofapoint_continuityeq} recovers the continuity equation \eqref{ch2:eq:continuity_eq} as follows,
            \begin{align}\label{ch2:eq:continuity_proofapoint}
                \begin{split}
                    \nabla\cdot\vect{J}_P &= -\braces{1+\chi}^{-1}\frac{\partial }{\partial t}\braces{\nabla \cdot \vect{D}}\\
                    &=-\frac{\partial}{\partial t}\underbrace{\braces{1+\chi}^{-1}\sigma_v}_{\sigma},
                \end{split}
            \end{align}
            where we made use of \eqref{ch2:eq:GausssElectric}. This proves that the continuity equation remains valid for the quasi-static electric field assumption as opposed to the definition given in \cite{MenOSIAM2014}, as a consequence of the present bound charges due to the polarization properties of the material. Furthermore, from \eqref{ch2:eq:Amperes-original} we know there exists a  magnetic scalar potential $\Phi_M$, such that $\vect{H}\equiv -\nabla \Phi_M$, and subsequently, 
            \begin{align}
                \vect{B}&=\mu_0\braces{\vect{M}-\nabla\Phi_M},\label{ch2:magneticmagentization}
            \end{align}
            showing the influence of the magnetization on the magnetic field $\vect{B}$.  Due to the coupling of the electromagnetic and mechanical domains by \eqref{ch2:eq:Constitutive_relations}, we see that polarization and magnetization occur implicitly under the quasi-static electric field assumption.
            More precisely, in \eqref{ch2:eq:Constitutive_relations}, the direct coupling between the electric field $\vect{E}$ and the strain $\vect{\epsilon}$ for $\vect{D}\equiv 0$ is the effect of the magnetization and polarization effects. For the case of piezoelectric beams and actuators, we obtain the constraint $E_3=-\gamma\beta\epsilon_{11}$ from \eqref{ch2:eq:preliminaries_piezoelectricConstitutiveRelaitons_scalar} combined with $D_3= q_z \equiv 0$, which can be substituted in the dynamics \eqref{ch2:eq:PDE-piezoactuator-pde} and \eqref{ch2:eq:PDE-piezobeam} to obtain the dynamical model for a piezoelectric actuator and beam model under a quasi-static electric field assumption, respectively. 
            {\subsubsection{The static electric field} The last simplification of the electromagnetic domain to consider is the static electric field assumption, which considers the charges to be at rest and is obtained by restricting the electric field through $\nabla\times \vect{E}=0$. Therefore, with the use of Poincaré's theorem \cite{Kreyszig1989introductory}, the electric field expression is reduced to the static electric field relation 
            \begin{align}\label{ch2:eq:E-fieldStat}
                E=-\nabla \varphi,
            \end{align}
            see for instance \cite{tiersten1969linear,eom2013maxwell,MenOSIAM2014}. \blue{Then, the magnetic contribution to the electric field $\vect{E}$ is neglected, i.e. $\dot{\vect{A}}\equiv 0$, for the static electric field assumption, in contrast to \eqref{ch2:eq:E-fieldFD} for the fully dynamic case.} Moreover,
            from Faraday's law \eqref{ch2:eq:Faraday'sLaw} it follows that the magnetic field $\vect{B}$ becomes stationary ($\dot{\vect{B}}=0$) and without loss of generality $\vect{B}\equiv 0$. Therefore, 
            \begin{align*}
                \frac{\partial B_2}{\partial t}=\mu \ddot{q}=0,
            \end{align*}
            such that the right hand side of \eqref{ch2:eq:characterizing_eq_2} becomes an algabraic constraint. For piezoelectric beams and actuators, the constraint $\beta\tfrac{\partial D_3}{\partial z_1}=\gamma\beta\tfrac{\partial \epsilon_{11}}{\partial z_1}$ can be substituted in both \eqref{ch2:eq:PDE-piezoactuator} and \eqref{ch2:eq:PDE-piezobeam}, to obtain the respective dynamics for a piezoelectric actuator and piezoelectric beam under the static electric field assumption.}\\
                        
            The obtained models for piezoelectric beams for the cases of a static electric field and quasi-static electric field are quite similar, i.e.
            \begin{align}\label{ch2:eq:(Q)staticbeams}
                \rho \ddot{v}&=a v_{z_1z_1}\\
                v(0)&=0, \quad Cv_{z_1}=-\gamma V(t)
            \end{align}
            with $a\equiv C_{11}$ or $a\equiv C$, respectively. Under the quasi-static electric field assumption, the stiffness is slightly higher (i.e. $C=C_{11}+\gamma^2\beta$) than for the models with a static electric field assumption. The systems \eqref{ch2:eq:(Q)staticbeams} are known to be observable and exponentially stabilizable, see for instance \cite{LASIECKA2009167,KapitonovMM07,komornik2005}, making these models interesting for controller design, as is shown in \cite{Macchelli2017_BCLDPS_dissipation,KosarajudejongECC2019}. This concludes the comparison of piezoelectric beams. In the next section, we continue with the comparison of piezoelectric actuators.\\
            }

        
    \section{Comparison of piezoelectric actuators}\label{ch2:sec:Piezoelectricactuators}

        In this section, we analyse the nonlinear piezoelectric actuator from \cite{VenSSIAM2014} and relate it to the piezoelectric actuator \eqref{ch2:eq:PDE-piezoactuator-pde} and to the beam models in \cite{MenOSIAM2014}, for completeness. The nonlinear piezoelectric actuator derived in \cite{VenSSIAM2014} is a current controlled nonlinear fully dynamic piezoelectric actuator model and is derived by interconnecting the models of a transmission line \cite{JeltsemaSchaft_transmissionline} with the dynamics of a nonlinear Timoshenko beam within the \ac{pH}-framework. The resulting interconnected model describes the dynamics of a nonlinear piezoelectric actuator. With the use of linearization and reduction of the \ac{TBT} to the \ac{EBBT}, we show that the current-controlled nonlinear piezoelectric actuator derived in \cite{VenSSIAM2014} is of the same family as the derived voltage-controlled linear piezoelectric actuator \eqref{ch2:eq:PDE-piezoactuator}. 
        
        To describe the nonlinear piezoelectric actuator model from \cite{VenSSIAM2014}, which is based on the \ac{TBT} \cite{carrera2011beam}, we define $\phi$ as the transverse rotation of the beam and define $G$ as the shear-strain coefficient. Now we can describe the \ac{PDE} of the nonlinear current controlled nonlinear piezoelectric actuator and its boundary conditions present underneath the \ac{pH} nonlinear model presented in \cite{VenSSIAM2014} as follows,
        
        \begin{subequations}\label{ch2:eq:PDE-nonlinVoss}
        \small{
            \begin{align}
                \begin{split}\label{ch2:eq:PDE-nonlinVoss-pde}
                    \rho(A\ddot{v}-I_0\ddot{\phi})=\frac{\partial}{\partial z}\braces{C\braces{Av_z-I_0\phi_z+\tfrac{1}{2}Aw_z^2}-\gamma\beta A q_z}&\\
                    \rho A \ddot{w}= \frac{\partial}{\partial z}\left(\braces{C\braces{Av_z-I_0 \phi_z+\tfrac{1}{2}Aw_z^2}-\gamma\beta A q_z}w_z\right.
                     \\\left.+\tfrac{1}{2}AG\braces{w_z-\phi}\right)&\\
                     \rho(A\ddot{\phi}-I_0\ddot{v})=\frac{\partial}{\partial z}\left(C\braces{I\phi_z-I_0\braces{v_z-\tfrac{1}{2}w_z^2}}+\gamma\beta I_0 q_z\right.\\\left.-\tfrac{1}{2}AG\braces{\phi-w_z}\right)&\\
                     \mu A \ddot{q} = \frac{\partial}{\partial z}\left( \beta A q_z -\gamma\beta \braces{Av_z-I_0\phi_z+\tfrac{1}{2}Aw_z^2} \right)&
                \end{split}\\
                \begin{split}\label{ch2:eq:PDE-nonlinVoss-bc}
                     v(0)=w(0)=w_z(0)=\phi(0)=0&;\\
                     \beta A q_z(0)-\gamma\beta\left(Av_z(0)-I_0\phi_z(0) \right)=0&\\
                     C\braces{Av_z(1)-I_0\phi_z(1)+\tfrac{1}{2}Aw_z^2(1)}-\gamma\beta Aq_z(1)=0&\\
                     C\left(I\phi_z(1)-I_0\braces{v_z(1)-\tfrac{1}{2}w_z(1)^2}\right)+\gamma\beta I_0q_z(1)=0&\\
                     C\left(Aw_z^3(1)-I_0w_z(1)\phi_z(1)+Av_z(1)w_z(1)\right)\quad\quad\quad\quad &\\
                     +\tfrac{1}{2}AG\braces{w_z(1)-\phi(1)}-\gamma\beta A q_z(1){w_z(1)}=0&\\
                     \dot{q}(1)=I(t)&,
                 \end{split}
            \end{align}
            }
        \end{subequations}
        with cross-section and inertia as in \eqref{ch2:eq:crossseactionalA}. In the last line of \eqref{ch2:eq:PDE-nonlinVoss-bc}, the current input is present as boundary input through exploiting the so-called boundary ports of the \ac{pH}-formalism; see Remark \ref{ch2:rem:currentthroughtheboundary} for more details.

        The total energy corresponding to the system described by \eqref{ch2:eq:PDE-nonlinVoss} is
        \begin{align}
                 \mathcal{H}_{2}(t)&=\frac{1}{2}\int_0^1 \rho\left( A\dot{v}^2+I\dot{\phi}^2 -2I_0\dot{v}\dot{\phi}+A\dot{w}\right)+\mu A\dot{q}^2 \nonumber\\
                &+C\left(Av_z^2+I \phi_z^2 +\tfrac{1}{4}Aw_z^4-2I_0\braces{v_z-\tfrac{1}{2}w_z^2}\phi_z\right.\nonumber\\
                & \left.+Av_zw_z^2\right)+\tfrac{1}{2}AG\braces{w_z^2+\phi^2-2w_z\phi}+\beta A q_z^2\nonumber\\
                &-2\gamma\beta\braces{Av_z-I_0\phi_z+\tfrac{1}{2}Aw_z^2}q_z  ~dz.
        \end{align}

            \begin{remark}\label{ch2:rem:currentthroughtheboundary}
                Within the class of distributed parameter systems of the \ac{pH}-formalism, the inputs and outputs of a system are represented by the boundary ports, which are comprised of the flows (\textit{the rate of change of the generalized coordinates}) and efforts (\textit{forces}) on the boundary and are well established \cite{port-HamiltonianIntroductory14}. For the models \eqref{ch2:eq:PDE-nonlinVoss-bc} and \eqref{ch2:eq:PDE-piezoactuator}, the voltage input is a consequence of a force balance and is related to the effort boundary port (within the \ac{pH}-framework). The current control input of the piezoelectric model in \cite{VenSSIAM2014} is associated with the flow boundary port as the rate of change of the charge $q$. In the classical formulation, the flow boundary port is closely related to a Dirichlet boundary condition. In a simple case, we have, for instance, $q(a)=c$, where c is a constant. Therefore, $\dot{q}(a)=0$ with $a$ on the boundary. For a control input $u$ we get a Dirichlet boundary condition $q(1)=u$ with $u\equiv y$ being the output of the single integrator system with state $\nu$ and
                \begin{align}\label{ch2:eq:singleintegrator}
                \begin{split}
                     \dot{\nu}&=I(t)\\
                    y&=\nu.
                \end{split}
                \end{align}
                In summary, the current input of the \ac{pH} system in \cite{VenSSIAM2014} is represented by a single integrator system on the boundary within the classical formalism. Therefore, we refer to this type of current input as a \textit{current-through-the-boundary} input. Piezoelectric beams and composites with this type of current activation have been investigated in \cite{MenOCDC2014,OzerTAC2019}.
            \end{remark}
    
        To make the comparison between the linear well-posed piezoelectric actuator model \eqref{ch2:eq:PDE-piezoactuator-pde} and the nonlinear piezoelectric model \eqref{ch2:eq:PDE-nonlinVoss} we set both inputs to zero, i.e. $V(t)=0$ and $I(t)=0$, respectively. Furthermore, we linearize the nonlinear stress-strain relation and reduce the \ac{TBT} to the \ac{EBBT}. This can be done by identifying the difference between the a prioir assumptions the Euler-Bernoulli beam theory and the Timoshenko beam theory are based upon. These assumptions are for the Euler-Bernoulli beam theory as follows,\\
            \begin{assumption}
            A priori Euler-Bernoulli beam assumptions \cite{carrera2011beam}
                \begin{enumerate}
                    \item \label{ch2:assump-A}
                        The cross-section is rigid on its plane.
                    \item \label{ch2:assump-B}%
                        The cross-section rotates around a neutral surface remaining plane.
                    \item \label{ch2:ass:aprioriEB_third}%
                    The cross-section remains perpendicular to the neutral surface during deformation.
                \end{enumerate}
            \end{assumption}
        Assumption 1.\ref{ch2:ass:aprioriEB_third} restricts the rotation $\phi(z)$ of an Euler-Bernoulli beam, which is only affected by pure bending, i.e. $\phi(z)\equiv w_z(z)$. For the Timoshenko beam theory, Assumption 1.\ref{ch2:ass:aprioriEB_third} is relaxed, and Assumptions 1.1 and 1.2 remain valid. Therefore, we have that the shear stresses are taken into account for \ac{TBT}, and allows for a different rotation of the beam. To reduce the \ac{TBT} to the \ac{EBBT}, we impose Assumption 1.\ref{ch2:ass:aprioriEB_third}, through $\phi\equiv w_z$ on the linearized version of \eqref{ch2:eq:PDE-nonlinVoss}. Hence, we recover the dynamics that satisfy the three a priori Euler-Bernoulli beam assumptions and obtain the linear and reduced piezoelectric actuator model as follows,
        \begin{subequations}
            \begin{align}
                \begin{split}\label{ch2:eq:linearizedVoss_FD_c2b_dyn}
                        \rho\left(A \ddot{v}-I_{0} \ddot{w}_{z_{1}}\right)&=C\left(A {v}_{z_1 z_{1}}-I_{0} w_{z_1 z_1 z_{1}}\right)-\gamma \beta A q_{z_1 z_{1}}\\   
                        \rho\left(I \ddot{w}_{z_{1}}-I_{0} \ddot{v }\right) &=C\left(I w_{z_1 z_{1} z_{1}}-I_{0} v_{z_{1} z_{1}}\right)+\gamma \beta I_{0} q_{z_{1} z_{1}} \\
                        \mu A \ddot{q} &=\beta A q_{z_1 z_{1}}-\gamma \beta\left(A v_{z_{1} z_{1}}-I_{0} w_{z_1 z_{1} z_{1}}\right)\\
                        \end{split}\\
                        \begin{split}\label{ch2:eq:linearizedVoss_FD_c2b_boun}
                        v(0)=w(0)=w_{z_1}(0)=0\\
                         A\beta q_{z_1}(0)-\gamma\beta(Av_{z_1}(0)-I_0 w_{z_1 z_1})(0)=0\\
                         {C}A v_{z_1}(1)-C{I_0} w_{{z_1}{z_1}}(1)-\gamma\beta A q_{z_1}(1)=0\\
                	    CI w_{{z_1}{z_1}}(1)-C{I_0} v_{{z_1}}(1)+\gamma\beta  I_0  q_{z_1}(1)=0\\
                	   \dot{q}(1)=0,
                \end{split}
            \end{align}
        \end{subequations}
        \blue{and total energy function similar to \eqref{ch2:eq:Hamiltonian_PDE_fd}. Furthermore, we have that the internal dynamics and energy function coincide with the well-posed piezoelectric actuator model \eqref{ch2:eq:PDE-piezoactuator} derived in this work. Therefore, we conclude that the models investigated here, i.e. \eqref{ch2:eq:PDE-piezoactuator-pde}, the piezoelectric beam in \cite{MenOSIAM2014} and the nonlinear piezoelectric actuator with \textit{current-through-the-boundary} actuation \eqref{ch2:eq:PDE-nonlinVoss} from \cite{VenSSIAM2014} are from the same 'family' of voltage-actuated piezoelectric systems. However, the boundary conditions and the rate of change of the energy differ, allowing for different behaviour and stabilizability results. From \cite{VenSSIAM2014}, we know that the spatially discretized non-linear piezoelectric actuator with Timoshenko beam theory satisfies Brocket's necessary conditions for stabilizability. However, for the infinite-dimensional model \eqref{ch2:eq:linearizedVoss_FD_c2b_dyn}, this is an open question.}\\
               
        Interestingly, the derivation of the voltage-controlled models in \cite{MenOSIAM2014} and \cite{VenSSIAM2014} have been derived using different modelling approaches. In \cite{MenOSIAM2014}, Hamilton's principle has been applied to a Lagrangian. Whereas in \cite{VenSSIAM2014}, the model of a transmission line and a nonlinear Timoshenko beam model have been interconnected in the port-Hamiltonian framework in a power-conserving way. Moreover, the current actuation of \eqref{ch2:eq:PDE-nonlinVoss} originating from \cite{VenSSIAM2014} is made possible by exploiting the boundary ports in the port-Hamiltonian framework and circumvents the use of a single integrator system \eqref{ch2:eq:singleintegrator} on the boundary required in the classical formalism, see Remark \ref{ch2:rem:currentthroughtheboundary}.
        


    \section{Derivation and comparison of piezoelectric composites}\label{ch2:sec:Piezoelectriccomposites}
   
        \begin{figure}%
            \centering
           \label{ch2:fig:Composite_options}{{\includegraphics[width=0.19\paperwidth]{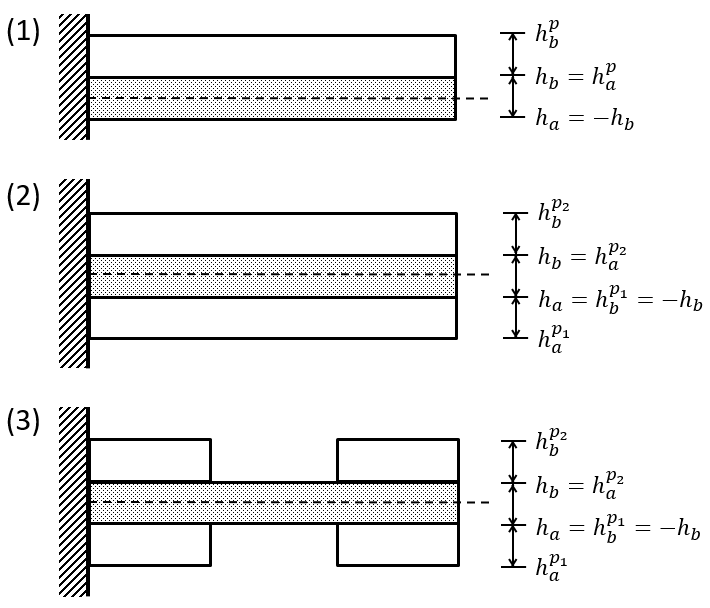} }}%
            \qquad
            \label{ch2:fig:bendingofabeam}{{ \includegraphics[width=0.15\paperwidth]{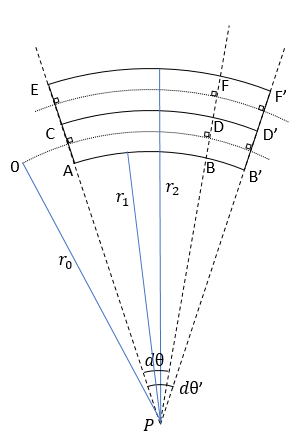} }}%
             \caption[Examples of piezoelectric composites and its bending]{
              (a) Examples of piezoelectric composite structures with the mechanical substrates and piezoelectric layers, respectively, in white and grey. The neutral axis is indicated by the dotted line. (1) Single-side piezoelectric layer allowing bending, (2) Sandwiched mechanical substrate for increased shear forces, depending on actuation, allows for bending/elongation. (3) Distributed piezoelectric layers are also possible for single-side actuation.
              (b) Bending of a beam around point $P$ with different angles $d\theta,d \theta'$.}%
        \end{figure}


        A piezoelectric composite is physically obtained by bonding at least one piezoelectric actuator onto a (purely) mechanical substrate and can be done in various ways; see Fig \ref{ch2:fig:Composite_options} for some of the flavours. Therefore, we can say that the piezoelectric actuator is a subsystem of the piezoelectric composite in a mathematical sense. In this section, we derive the piezoelectric composite model using linear Euler-Bernoulli beam theory with a piezoelectric layer on one side, see Fig \ref{ch2:fig:Composite_options}(1).\\ 
        Within the classical formalism, it is common to redo the application of Hamilton's principle to the Lagrangian associated with the piezoelectric composite, see for instance \cite{OzerMCSS2015,OzerTAC2019}, whereas, in the \ac{pH} approach, it is possible to interconnect different subsystems in a power-conserving way and obtain the mathematical model of the overall model directly. \blue{Then, it must also be possible to obtain an interconnected system model by interconnecting subsystems within this hybrid framework through careful considerations.} Here, we show this for interconnecting the piezoelectric actuator with a mechanical substrate to obtain the piezoelectric composite model. The piezoelectric actuator model \eqref{ch2:eq:PDE-piezoactuator} takes into account the rotational inertia's $I_0$, such that the deformation depending on the geometry of the interconnected piezoelectric actuator and mechanical substrate are treated properly. In contrast to the piezoelectric beam \eqref{ch2:eq:PDE-piezobeam}, where $I_0\downarrow0$ and requires redoing the derivation of the composite equations through Hamilton's principle. For the interconnection, a power-conserving interconnection is required. The power-conserving interconnection translates to \textit{equal speeds at the interconnection surface} ensuring the continuity of velocity and balanced exchange of force (equal yet opposed direction) \cite{port-HamiltonianIntroductory14}. For piezoelectric composites, the continuity of strain at the interconnection ensures the continuity of velocity. Moreover, the strain across the normal of the neutral axis remains the same under bending, which can  be seen from the computed strains 
        \begin{align*}
            \varepsilon_E&= \frac{\bar{EF'}-\bar{EF}}{\bar{EF}}=\frac{r_2d\theta'-r_2d\theta}{r_2d\theta}=\frac{d\theta'}{d\theta}-1,\\
            \varepsilon_A&=\frac{\bar{AB'}-\bar{AB}}{\bar{AB}}=\frac{r_1d\theta'-r_1d\theta}{r_1d\theta}=\frac{d\theta'}{d\theta}-1,
        \end{align*}
        of the bending composite in Fig \ref{ch2:fig:bendingofabeam}, even though the length of the curves differ at different heights, e.g. $\bar{EF}>\bar{AB}$.\\
        The mechanical substrate is a mechanical beam model, which can be derived using beam theory separately via Hamilton's principle. However, it can also be recovered from the piezoelectric actuator model by neglecting the piezoelectric effects \cite{MenOSIAM2014}. More precisely, in \eqref{ch2:eq:Constitutive_relations}, we see that the coupling between the mechanical domain $\braces{\sigma,\epsilon}$ and the electromagnetic domain $\braces{D,E}$ is evident through the piezoelectric constants matrix. By neglecting the effects of the coupling, Hooke's Law and the constitutive relations for a dielectric medium are recovered. For the piezoelectric actuator, the mechanic-electromagnetic coupling is a result of the piezoelectric constant $\gamma$, see \eqref{ch2:eq:preliminaries_piezoelectricConstitutiveRelaitons_scalar}. Therefore, by letting $\gamma\rightarrow 0$, the mechanical and electromagnetic domain become decoupled, resulting in a purely mechanical substrate model
        \begin{subequations}\label{ch2:eq:purelymechanicalsubstrate}
        \begin{align}
            \begin{split}
                 \rho\left(A \ddot{v}-I_{0} \ddot{w}_{z_{1}}\right)&=C\left(A {v}_{z_1 z_{1}}-I_{0} w_{z_1 z_1 z_{1}}\right)\\   
            \rho\left(I \ddot{w}_{z_{1}}-I_{0} \ddot{v }\right) &=C\left(I w_{z_1 z_{1} z_{1}}-I_{0} v_{z_{1} z_{1}}\right)\\
            \end{split}\\
        \begin{split}
        v(0)=w_{z_1}(0)=0\\
         {C}A v_{z_1}(1)-C{I_0} w_{{z_1}{z_1}}(1)\\
        	    CI w_{{z_1}{z_1}}(1)-C{I_0} v_{{z_1}}(1)=0.\\
        \end{split}
        \end{align}
        \end{subequations}
        The obtained mechanical substrate model \eqref{ch2:eq:purelymechanicalsubstrate} can be interconnected with the piezoelectric actuator \eqref{ch2:eq:PDE-piezoactuator} by assuming perfect bonding of the two layers. Then, we obtain the model for the piezoelectric composite on the spatial domain $z_1\in[0,1]$ as follows,
        \begin{subequations}\label{ch2:eq:PDE_composite_FD}
        \begin{align}
                	\begin{split}
                    	\rho_A\ddot{v}-\rho_{I_0}\ddot{w}_{z_1}&=C_A v_{z_1z_1}-C_{I_0}w_{z_1z_1z_1}-\gamma\beta A_p q_{z_1 z_1}\\
                    	\rho_I \ddot{w}_{z_1}-\rho_{I_0}\ddot{v}&=C_Iw_{z_1z_1z_1}-C_{I_0}v_{z_1z_1}+\gamma\beta I_0 q_{z_1 z_1}\\
                    	\mu A_p\ddot{q}&=\beta A_p q_{z_1z_1}-\gamma \beta\left(A v_{z_{1} z_{1}}-I_{0} w_{z_1 z_{1} z_{1}}\right),\\
                	\end{split}\\
         \begin{split}\label{ch2:eq:PDE_composite_bc}
                 v(0)=w(0)=w_z(0)=q(0)=0;\\
                 C_A v_z(\ell)-C_{I_0}w_{zz}(\ell)-\gamma\beta A q_{z_1}(\ell)=0,\\
                 C_Iw_{zz}(\ell)-C_{I_0}v_{z}(\ell)+\gamma\beta I_0 q_{z_1}(\ell)=0,\\ 
                 A\beta q_{z_1}(\ell)-\gamma\beta(Av_{z_1}(\ell)-I_0 w_{z_1 z_1}(\ell))=-AV(t),
         \end{split}
        \end{align}
        \end{subequations}
        with the concatenated system parameters
    	\begin{eqnarray}\label{ch2:eq:new_energycoefficients}
        		&\rho_A:=\rho_pA_p+\rho_s A_s,\qquad  &C_A:=C_pA_p+C_sA_s, \nonumber\\
        		&\rho_I:=\rho_pI_p+\rho_s I_s, \qquad &C_I:=C_pI_p+C_sI_s, \nonumber\\
        		&\rho_{I_0}:=\rho_p I_0, \qquad &C_{I_0}:=C_{11,p}I_0.
    	\end{eqnarray} 
    	where the subscripts $p,s$ indicate the system parameters associated with the piezoelectric actuator \eqref{ch2:eq:PDE-piezoactuator} and mechanical substrate \eqref{ch2:eq:purelymechanicalsubstrate}, respectively.\\
    	The total energy corresponding to the composite model \eqref{ch2:eq:PDE_composite_FD} is as follows
    	\begin{align}\label{ch2:eq:Hamiltonian_PDE_fd_comp}
        	\begin{split}
            	\mathcal{H}(t)&=\frac{1}{2}\int_0^\ell\left[\rho_A \dot{v}^2+\rho_I\dot{w}^2_{z_1}-2\rho_{I_0}\dot{v}\dot{w}_{z_1}+\mu A_p\dot{q}^2\right.  \\ &\quad\left.+C_Av_{z_1}^2+C_Iw_{z_1z_1}^2-2C_{I_0}v_{z_1}w_{z_1z_1}+\beta A_p q_{z_1}^2\right.\\
            	&\quad\left.-2 \gamma \beta\left(A v_{z_{1}}-I_{0} w_{z_{1}z_{1}}\right)q_{z_1}\right]dz_1.
        	\end{split}
    	\end{align} 
         This concludes the model derivation of the voltage-actuated piezoelectric composite. Interestingly, the structure and boundary conditions of the piezoelectric composite \eqref{ch2:eq:PDE_composite_FD} and piezoelectric actuator \eqref{ch2:eq:PDE-piezoactuator} are similar, although have different system parameters. Therefore, by following the same procedure as for the piezoelectric actuator \eqref{ch2:eq:PDE-piezoactuator}, it follows that the piezoelectric composite \eqref{ch2:eq:PDE_composite_FD} is well-posed. \\
         
         The piezoelectric composites with a static electric field and quasi-static electric field can be obtained from \eqref{ch2:eq:PDE_composite_FD} by the procedures discussed in Section \ref{ch2:sec:electromagneticconsiderations} and result in almost identical models with a difference in the stiffness of the attached piezoelectric composite, similarly to the piezoelectric beams \eqref{ch2:eq:(Q)staticbeams}. More precisely,
         \begin{subequations}\label{ch2:eq:(Q-)Staticcomps}     \begin{align}
            \begin{split}
                \rho_A \ddot{v}-\rho_{I_0}\ddot{w}_z & = \bar{C}_A v_{zz}-\bar{C}_{I_0}w_{zzz}\\
                 \rho_I\ddot{w}_z - \rho_{I_0}\ddot{v}&=\bar{C}_Iw_{zzz}-\bar{C}_{I_0}v_{zz}
                \end{split}\\
                \begin{split}
                    v(0)=&w_z(0) = 0\\
                 C_I w_{zz}(1)-&C_{I_0}v_z(1)=0\\
                 \bar{C}_Av_z(1)-&\bar{C}_{I_0} I_0^p)w_{zz}(1)=-\gamma A V(t) 
            \end{split}
      \end{align}
           \end{subequations}
         with the coefficients $\bar{C}_A$, $\bar{C}_I$, and $\bar{C}_{I_0}$ given in Table \ref{ch2:table:S&QS_coeff}, and energy functional
         \begin{align}\label{ch2:eq:Hamiltonian(Q-)Static_composite}
            \mathcal{H}_{(Q)S}(t)&=\frac{1}{2}\int_0^\ell  \left[ \rho_A \dot{v}^2+\rho_I\dot{w}^2_{z_1}-2\rho_{I_0}\dot{v}\dot{w}_{z_1}  \right.\\
            &\left.+\bar{C}_Av_{z_1}^2+\bar{C}_Iw_{z_1z_1}^2-2\bar{C}_{I_0}v_{z_1}w_{z_1z_1}\right]~dz_1. \nonumber
         \end{align}
         
         The piezoelectric composite models under the static or quasi-static electric field assumption have direct strain injection through the applied voltage by the absence of dynamics for the electromagnetic domain.
    
         \begin{table}
           \centering
            \begin{tabular}{ c | c | c }
                    & Static & Quasi-static   \\ \hline 
                $\bar{C}_A$ & $C_A-\gamma^2\beta A^p$ & ${C}_A$ \\ \hline
                $\bar{C}_I$ & $C_I-\gamma^2\beta \frac{(I_0^p)^2 }{A^p}$ & ${C}_I$     \\  \hline
                $\bar{C}_{I_0}$ & $C_{I_0}-\gamma^2\beta I_0^p$ & ${C}_{I_0}$    
            \end{tabular}
            \caption{Coefficients for Static and Quasi-static piezoelectric composite, corresponding to \eqref{ch2:eq:(Q-)Staticcomps}}
            \label{ch2:table:S&QS_coeff}
        \end{table}

         In \cite{VenSSIAM2014}, a nonlinear piezoelectric composite is derived with the use of the Timoshenko beam theory. They show that the spatially discretized system satisfies Brocket's necessary conditions \cite{Brockett83asymptoticstability} for stabilizability. Furthermore, the nonlinear fully dynamic piezoelectric composite model in \cite{VenSSIAM2014} can be linearized, and the Timoshenko beam can be reduced to the Euler-Bernoulli beam theory, such that an equivalent model to \eqref{ch2:eq:PDE_composite_FD} is obtained. In \cite{VenSSIAM2014}, a quasi-static piezoelectric actuator/composite model is proposed with current actuation. This model does not rely on the \textit{current-through-the-boundary} method we discussed in Remark \ref{ch2:rem:currentthroughtheboundary} and is, in fact, a purely current controlled piezoelectric model with quasi-static electric field assumption. Thus differing significantly from the piezoelectric composite models proposed in \cite{OzerTAC2019}. In \cite{MenOACC2014}, two linear piezoelectric composite models with Euler-Bernoulli and Timoshenko beam theory are derived. The composites in \cite{MenOACC2014} assume spatially distributed piezoelectric patches across the length of the mechanical substrate. The voltage-controlled piezoelectric composite model derived here \eqref{ch2:eq:PDE_composite_FD} assumes a piezoelectric layer across the whole length of the composite, which is useful for applications where one side of the composite has a specific function such as reflecting and focusing electromagnetic waves, e.g. in inflatable space structures and deformable mirrors. In the next section, we show that the fully dynamic voltage-controlled piezoelectric composite \eqref{ch2:eq:PDE_composite_FD} is \blue{useful for applications} by investigating its stabilizability properties.

    
    \section{Feedback stabilization of piezoelectric composites}\label{ch2:sec:Stabz}
    
            To stabilize the fully dynamic voltage-controlled piezoelectric composite model \eqref{ch2:eq:PDE_composite_FD}, we investigate a Lyapunov-based control strategy and make use of the following theorem for infinite dimensional systems.
                \begin{theorem}[LaSalle's Invariance Principle \cite{Luo1999SSIDSA}]\label{ch2:thm:LaSalleinv}
                    Let $\mathcal{V}$ be a continuous Lyapunov function for the strongly continuous semigroup $T(t)$ on $X$ and let the largest invariant subset be denoted as
                        $$\mathcal{W}:=\{\vect{x} \mid \tfrac{d}{dt}\mathcal{V}(\vect{x})=0\}. $$
                    If $x\in X$ and the orbit 
                        $$\tilde{\lambda}(\vect{x}) = \bigcup_{t \geq 0} T(t)\vect{x} $$
                     is precompact, then for the distance $d(.,.)$, we have that
                        $$\lim_{t\rightarrow\infty} d(T(t)\vect{x},\mathcal{W})=0.$$
                    Here, by invariance of $\mathcal{W}$ under $T(t)$, we mean $T(t)\mathcal{W}=\mathcal{W}$ for all $t\geq 0$. 
                    \qedwhite
                \end{theorem}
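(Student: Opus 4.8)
The plan is to run the classical $\omega$-limit set argument, with the precompactness hypothesis playing the role that automatic compactness of bounded sets plays in $\R^n$. First I would record the monotonicity ingredient: since $\mathcal{V}$ is a Lyapunov function for $T(t)$, the scalar map $t\mapsto \mathcal{V}(T(t)\vect{x})$ is continuous (strong continuity of $T(t)$ plus continuity of $\mathcal{V}$) and non-increasing. Because the orbit $\tilde{\lambda}(\vect{x})$ is precompact, its closure is compact, so $\mathcal{V}$ is bounded on it and in particular bounded below along the trajectory; a bounded, non-increasing real function converges, so $c:=\lim_{t\to\infty}\mathcal{V}(T(t)\vect{x})$ exists in $\R$.

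Next I would introduce the $\omega$-limit set $\omega(\vect{x}):=\bigcap_{s\ge 0}\overline{\bigcup_{t\ge s}T(t)\vect{x}}$ and establish its standard properties, each of which leans on precompactness: (i) $\omega(\vect{x})$ is nonempty and compact, being a nested intersection of nonempty compact sets; (ii) $\omega(\vect{x})$ is invariant, $T(t)\omega(\vect{x})=\omega(\vect{x})$ for all $t\ge 0$, which I would prove from the semigroup law $T(t)T(s)=T(t+s)$ together with joint continuity of $(t,\vect{x})\mapsto T(t)\vect{x}$ on compacta: if $T(t_n)\vect{x}\to\vect{y}$ then $T(t_n+t)\vect{x}\to T(t)\vect{y}$ so $T(t)\vect{y}\in\omega(\vect{x})$, and conversely a convergent subsequence of $\{T(t_n-t)\vect{x}\}$, extracted using precompactness, produces a preimage of $\vect{y}$ in $\omega(\vect{x})$; and (iii) $d(T(t)\vect{x},\omega(\vect{x}))\to 0$ as $t\to\infty$, since otherwise some sequence $T(t_n)\vect{x}$ would stay a fixed distance away from $\omega(\vect{x})$ yet admit a convergent sub-subsequence whose limit necessarily lies in $\omega(\vect{x})$, a contradiction.

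Then I would show $\omega(\vect{x})\subseteq\mathcal{W}$. For $\vect{y}\in\omega(\vect{x})$, choose $t_n\to\infty$ with $T(t_n)\vect{x}\to\vect{y}$; continuity of $\mathcal{V}$ gives $\mathcal{V}(\vect{y})=\lim_n\mathcal{V}(T(t_n)\vect{x})=c$. By invariance, $T(t)\vect{y}\in\omega(\vect{x})$ for every $t\ge 0$, hence $\mathcal{V}(T(t)\vect{y})=c$ is constant in $t$, so $\tfrac{d}{dt}\mathcal{V}(T(t)\vect{y})\equiv 0$ and in particular $\tfrac{d}{dt}\mathcal{V}(\vect{y})=0$, i.e.\ $\vect{y}\in\{\vect{x}\mid\tfrac{d}{dt}\mathcal{V}(\vect{x})=0\}$. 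Since $\omega(\vect{x})$ is itself invariant and contained in that set, it is contained in the largest invariant subset $\mathcal{W}$. Combining this with (iii), $d(T(t)\vect{x},\mathcal{W})\le d(T(t)\vect{x},\omega(\vect{x}))\to 0$, which is the assertion.

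I expect the main obstacle to be the rigorous verification of invariance and attractivity of $\omega(\vect{x})$ in the Banach-space setting: every limit-extraction there must be justified by the precompactness hypothesis, and one must be careful when passing limits through $T(t)(\cdot)$ that the continuity used is the correct (locally uniform) one. A secondary but essential technical point is fixing the interpretation of the symbol $\tfrac{d}{dt}\mathcal{V}(\vect{x})$ as the (one-sided) derivative at $t=0$ of $t\mapsto\mathcal{V}(T(t)\vect{x})$, so that the passage from ``$\mathcal{V}$ is constant along $T(t)\vect{y}$'' to ``$\vect{y}\in\mathcal{W}$'' is literally valid; this is where a standing regularity assumption on $\mathcal{V}$ along trajectories is implicitly used.
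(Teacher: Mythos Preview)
The paper does not supply its own proof of this theorem: it is quoted from \cite{Luo1999SSIDSA} and closed with a \qedwhite, so there is nothing in the paper to compare your argument against. Your proposal is the standard $\omega$-limit set proof of LaSalle's principle in Banach spaces and is essentially correct as sketched; the caveats you flag yourself (careful use of precompactness for every subsequence extraction, and fixing the meaning of $\tfrac{d}{dt}\mathcal{V}(\vect{x})$ as the Dini derivative along the flow) are exactly the points where rigor is needed, but none of them is a gap in the strategy.
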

            Recall the corresponding energy functional $\mathcal{H}(t)$ in \eqref{ch2:eq:Hamiltonian_PDE_fd_comp}, with the change of energy along the trajectories of \eqref{ch2:eq:PDE_composite_FD} as follows,
            \begin{align}\label{ch2:eq:Lyap_roc}
                \frac{d}{dt}\mathcal{H}(t)=-V(t)\dot{q}(1)=-\kappa (\dot{q}(1))^2 \leq 0
            \end{align}
            for control choice $V(t)=\tfrac{\kappa}{A^p} \dot{q}(1)$ with $\kappa > 0$. Then, $\mathcal{H}(t)$ is considered a Lyapunov candidate function to prove the asymptotic stability of the closed system. Let $\vect{x}=\col(v_z,w_{zz},q_z,\rho_A \dot{v}-\rho_{I_0}\dot{w}_z,\rho_I \dot{w}_z - \rho_{I_0}\dot{v}, {\mu A^p } \dot{q})$ and denote the closed-loop system as follows,
            \begin{align}\label{ch2:eq:closedloop_sys}
                \begin{split}
                    \dot{\vect{x}}&=\mathcal{A}\vect{x}, t>0\\
                    \vect{x}(0)&=\vect{x}_0 \in \Dom(\mathcal{A})
                \end{split}
            \end{align}
            with the closed and densely defined operator
            \begin{align}\label{ch2:eq:closedloopgen}
                    \mathcal{A}\vect{x}&=\begin{pmatrix} \tfrac{\rho_I}{\rho_A\rho_I-\rho_{I_0}^2}x_4' + \tfrac{\rho_{I_0}}{\rho_A\rho_I-\rho_{I_0}^2} x_5'\\ \tfrac{\rho_A}{\rho_A\rho_I-\rho_{I_0}^2} x_5' + \tfrac{\rho_{I_0}}{\rho_A\rho_I-\rho_{I_0}^2}x_4'    \\  \tfrac{1}{\mu A^p} x_6'  \\
                    C_A x_1'-C_{I_0}x_2'-\gamma\beta A_p x_3'\\
                    C_Ix_2'-C_{I_0}x_1'+\gamma\beta I_0 x_3'\\
                    \beta A_p x_3'-\gamma \beta\left(A x_1' -I_{0} x_2'\right)  \end{pmatrix}
            \end{align}
            on the domain 
            \begin{align}
                        &\Dom(\mathcal{A})=\{ \vect{x} \in \mathcal{X}_\mathcal{H} |  \nonumber \\
                        & \quad C_A x_1(1)-C_{I_0}x_2(1)-\gamma\beta A x_3(1)=0,\\
                        & \quad C_Ix_2(1)-C_{I_0}x_1(1)+\gamma\beta I_0 x_3(1)=0, \nonumber\\ 
                       & \quad A\beta x_3(1)-\gamma\beta(A x_1(1)-I_0 x_2(1))+ \kappa {x_6}(1)=0 \}. \nonumber
            \end{align}
           It is straightforward to show that the operator \eqref{ch2:eq:closedloopgen} generates a strongly continuous semigroup of contractions $T(t)$, which is well-posed in a similar fashion as is shown for \eqref{ch2:eq:opA_FD}. Furthermore, we are able to prove asymptotic stability of the closed-loop system. Therefore, define the inner-product $\langle \vect{x},\vect{x}\rangle_\mathcal{H}=2\mathcal{H}(t)$ and assume the following inequalities on the system parameters.
           
           \begin{assumption}{System parameter inequalities for \eqref{ch2:eq:closedloop_sys}.}\label{ch2:ass:sys_coef}
                \begin{align*}
                    \begin{array}{rlrl}
                       \rho_A \rho_I &\neq \rho_{I_0}^2, \quad\quad   & C_I      &\neq    \gamma^2\beta \frac{(I_0^p)^2}{A^p}, \\
                        C_A C_I &\neq C_{I_0}^2,         \quad\quad   & C_{I_0} &\neq \gamma^2\beta A^p,    \\
                        C_A &\neq \gamma^2\beta A^p,   \quad\quad    & C_{I_0} &\neq n \gamma^2\beta  I_0^p, ~,
                    \end{array}
                \end{align*}
                {with $n\in\setof{1,2}$ }
           \end{assumption}
          
               \begin{theorem}\label{ch2:Thm:AStabz_FD_comp}
                    Let the inequalities in Assumption \ref{ch2:ass:sys_coef} hold and consider the closed-loop system  \eqref{ch2:eq:closedloopgen}. Furthermore, consider the Lyapunov candidate functions $\mathcal{V}=\mathcal{H}(t)$. Then, the closed-loop system is well-posed and asymptotically stable.
                \end{theorem}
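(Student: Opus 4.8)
The plan is to apply LaSalle's invariance principle (Theorem~\ref{ch2:thm:LaSalleinv}) with the Lyapunov function $\mathcal{V}=\mathcal{H}(t)$ from \eqref{ch2:eq:Hamiltonian_PDE_fd_comp}. The proof splits into four stages: well-posedness of the closed-loop operator \eqref{ch2:eq:closedloopgen}; precompactness of the orbits of \eqref{ch2:eq:closedloop_sys}; identification of the largest invariant set $\mathcal{W}$; and the verification that $\mathcal{W}=\setof{0}$, which is the heart of the matter.

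\textbf{Well-posedness.} I would repeat the argument used for Theorem~\ref{ch2:theorem:contractionFD}. Write $\mathcal{X}_\mathcal{H}$ for the state space endowed with the inner product $\angles{\cdot,\cdot}_\mathcal{H}=2\mathcal{H}(t)$; the non-degeneracy conditions $\rho_A\rho_I\neq\rho_{I_0}^2$ and $C_AC_I\neq C_{I_0}^2$ from Assumption~\ref{ch2:ass:sys_coef} guarantee that the density and stiffness matrices appearing in \eqref{ch2:eq:closedloopgen} are invertible, so that $\mathcal{A}$ is well defined and $0\in\rho(\mathcal{A})$. An \ac{IBP} computation using the boundary conditions in $\Dom(\mathcal{A})$ gives $\angles{\mathcal{A}\vect{x},\vect{x}}_\mathcal{H}=-\kappa\,\abs{x_6(1)}^2\le0$, and the adjoint $\mathcal{A}^\ast$ is obtained by reversing the sign of the feedback term in the last boundary condition, so that $\angles{\mathcal{A}^\ast\vect{x},\vect{x}}_\mathcal{H}=-\kappa\,\abs{x_6(1)}^2\le0$ as well. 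By the Lumer--Phillips theorem (Theorem~\ref{ch2:theorem:Lumer-Phillips}), $\mathcal{A}$ generates a $C_0$-semigroup of contractions $T(t)$ on $\mathcal{X}_\mathcal{H}$.

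\textbf{Precompactness and the invariant set.} Solving $\mathcal{A}\vect{x}=\vect{f}$ is a regular elliptic boundary value problem on $[0,1]$ whose solution gains one spatial derivative per component; since $H^1(0,1)\hookrightarrow L^2(0,1)$ is compact, $\mathcal{A}^{-1}$ is compact on $\mathcal{X}_\mathcal{H}$, so the orbit $\tilde\lambda(\vect{x})$ of every classical solution $\vect{x}\in\Dom(\mathcal{A})$ is precompact, which is exactly the hypothesis of Theorem~\ref{ch2:thm:LaSalleinv}. From \eqref{ch2:eq:Lyap_roc}, $\tfrac{d}{dt}\mathcal{V}(\vect{x})=-\kappa\,\abs{x_6(1)}^2$, hence $\mathcal{W}\subseteq\setof{\vect{x}\in\Dom(\mathcal{A})\mid x_6(1)=0}$; invariance forces $x_6(1)=0$ along the whole trajectory, so all time-derivatives of $x_6$ at $z_1=1$ vanish too, and the feedback boundary condition in \eqref{ch2:eq:closedloopgen} collapses on $\mathcal{W}$ to the homogeneous relation $A\beta x_3(1)-\gamma\beta(Ax_1(1)-I_0 x_2(1))=0$.

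\textbf{Showing $\mathcal{W}=\setof{0}$ and concluding.} On $\mathcal{W}$ the energy $\mathcal{H}$ is constant, so $T(t)|_{\mathcal{W}}$ extends to a unitary $C_0$-group; combined with the compact resolvent, it suffices to exclude eigenvalues $i\omega$, $\omega\in\R$, of $\mathcal{A}$ whose eigenfunctions additionally satisfy $x_6(1)=0$. Setting $\mathcal{A}\vect{x}=i\omega\vect{x}$ turns \eqref{ch2:eq:closedloopgen} into a constant-coefficient linear ODE system in $z_1$ for $(v,w_{z_1},q)$; one solves it explicitly and then imposes the clamped conditions at $z_1=0$, the natural conditions at $z_1=1$, and the extra over-determining condition $\dot q(1)=0$, concluding that the only solution is trivial. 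The remaining inequalities of Assumption~\ref{ch2:ass:sys_coef}, namely $C_A\neq\gamma^2\beta A^p$, $C_I\neq\gamma^2\beta(I_0^p)^2/A^p$, $C_{I_0}\neq\gamma^2\beta A^p$ and $C_{I_0}\neq n\gamma^2\beta I_0^p$ for $n\in\setof{1,2}$, are precisely what keeps the characteristic matrix of this ODE system nonsingular and rules out the finitely many resonant coefficient values at which a nontrivial eigenfunction could survive; this spectral case analysis is the main obstacle of the proof. With $\mathcal{W}=\setof{0}$, Theorem~\ref{ch2:thm:LaSalleinv} gives $\norm{T(t)\vect{x}}_\mathcal{H}=d(T(t)\vect{x},\setof{0})\to0$ for every $\vect{x}$ with precompact orbit, and density together with contractivity extends this to all $\vect{x}\in\mathcal{X}_\mathcal{H}$, i.e.\ the closed-loop system is asymptotically stable.
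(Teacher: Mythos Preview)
Your proposal follows the same overall architecture as the paper's proof: Lumer--Phillips for well-posedness via dissipativity of $\mathcal{A}$ and $\mathcal{A}^\ast$, compact resolvent (the paper phrases this through the Sobolev embedding of $\Dom(\mathcal{A})$ into $\mathcal{X}_\mathcal{H}$ and \cite{Luo1999SSIDSA}, Theorem~3.65) for precompactness of orbits, and then LaSalle's principle (Theorem~\ref{ch2:thm:LaSalleinv}).

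The genuine difference is in how the two arguments show $\mathcal{W}=\setof{0}$. The paper reduces this to verifying that the stationary ODE $\mathcal{A}\vect{\varphi}=\vect{0}$ with the clamped and natural boundary data has only the trivial solution, i.e.\ it checks the single case $\omega=0$. You instead invoke the standard spectral reduction: on $\mathcal{W}$ the semigroup is isometric, hence, together with the compact resolvent, it suffices to rule out every eigenpair $(\mathcal{A}-i\omega)\vect{\varphi}=0$, $\omega\in\R$, subject to the over-determining condition $x_6(1)=0$. Your route is the more complete one, since an invariant trajectory in $\setof{x:\dot{\mathcal{V}}(x)=0}$ need not be an equilibrium, and it also makes transparent where each of the remaining inequalities in Assumption~\ref{ch2:ass:sys_coef} enters (as non-degeneracy of the characteristic matrix for $\omega\neq0$). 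Both treatments ultimately leave the explicit ODE/eigenvalue computation as an exercise, so in terms of level of detail they are comparable; what you gain is a logically tighter justification of the LaSalle step, at the cost of having to carry the parameter $\omega$ through the calculation.
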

                \begin{proof}
                    The closed and densely defined operator $\mathcal{A}$ and its adjoint $\mathcal{A}^\ast$ are dissipative, i.e.
                      \begin{align*}
                            \angles{\mathcal{A} \vect{x},\vect{x}}_X&=-\kappa (x_6(1))^2\leq 0, \\
                            \angles{\mathcal{A}^*{\vect{x}},\vect{x}}_X&=-\kappa (x_6(1))^2\leq 0,
                      \end{align*} 
                      where we computed $\mathcal{A}^\ast$ in a similar fashion as in Lemma \ref{ch2:lem:FD_adjoint}. Hence, by use of the Lummer-Phillips Theorem \ref{ch2:theorem:Lumer-Phillips}, the operator $\mathcal{A}$ generates a strongly continuous semigroup of contractions and is well-posed.\\
                    From the Sobolev embedding Theorem \cite{Sobolevembedding2006a} we have that $\Dom(\mathcal{A})$ is compact in $\mathcal{X}_\mathcal{H}$ and thus $\mathcal{A}$ is closed. Therefore, the resolvent of $\mathcal{A}$ is compact for all $\lambda$ in the resolvent set \cite{Kato1995}. Using   \cite{Luo1999SSIDSA}(Theorem 3.65), we see that the orbit  $\tilde{\gamma}(\vect{x})$ is precompact and the limit set is non-empty. It remains to show that the largest invariant subset 
                      $$\mathcal{W}=\{x \mid \dot{\mathcal{V}}(\vect{x})=0\},$$
                      with $x_6(1)\equiv 0$ contains only the zero vector $\vect{0}$. Therefore, let Assumption \ref{ch2:ass:sys_coef} hold and recall the boundary conditions $v(0)=w(0)=w_z(0)=q(0)=0$ and compute that the solution to the ode
                        \begin{align}
                            \mathcal{A}\vect{\varphi}(z) = \vect{0}, 
                        \end{align}
                      is $\vect{\varphi}(z)\equiv\vect{0}$ for $\vect{\varphi}\in\Dom(\mathcal{A})$. Hence, $\vect{0}\in \mathcal{W}$ is the only solution contained in $\mathcal{W}$. Then, by use of LaSalle's Invariance Principle Theorem \ref{ch2:thm:LaSalleinv}, we have that 
                      \begin{align}
                          \lim_{t\rightarrow\infty} d(T(t)\vect{x},\mathcal{W})=d(T(t)\vect{x},\vect{0})=0,
                      \end{align}
                      for all $\vect{x}\in \Dom(\mathcal{X})$ and conclude that the closed-loop system of the voltage-controlled fully dynamic electromagnetic piezoelectric composite is asymptotically stable. 
                  \end{proof}

                Furthermore, we show the asymptotic stability of the current-through-the-boundary-controlled fully dynamic electromagnetic piezoelectric composite \eqref{ch2:eq:linearizedVoss_FD_c2b_dyn}, which is of the same family as the asymptotically stabilizable voltage-controlled piezoelectric composite \eqref{ch2:eq:PDE_composite_FD}. In \cite{VenSSIAM2014}, it has been shown that the spatially discretized current-through-the-boundary-controlled piezoelectric composite satisfies Brocket's necessary condition for stabilizability. Here, we continue to prove the infinite-dimensional system is stabilizable.
            
                \begin{theorem}\label{ch2:prop:stabz_FD_c2b_composite}
                    Let the inequality $C\neq\gamma^2\beta$ hold additionally to  Assumption \ref{ch2:ass:sys_coef} and consider the closed-loop
                    operator \eqref{ch2:eq:closedloopgen} with domain
                \begin{align}\label{ch2:eq:domain_current2b}
                    \begin{split}
                            &\Dom(\mathcal{A}_{\dot{q}})=\{ \vect{x} \in \mathcal{X}_\mathcal{H} |  \\
                            & \quad C_A x_1(1)-C_{I_0}x_2(1)-\gamma\beta A x_3(1)=0,\\
                            & \quad C_Ix_2(1)-C_{I_0}x_1(1)+\gamma\beta I_0 x_3(1)=0,\\ 
                           & \tfrac{1}{\mu A^p} x_6+ \kappa {x_3}(1)=0 \},
                   \end{split}
                \end{align}
                    obtained by closing the loop of \eqref{ch2:eq:linearizedVoss_FD_c2b_dyn} though the control $\dot{q}(1)=\mathcal{I}(t)=-(A^p \beta (1-\tfrac{\gamma^2\beta}{C_A}))^{-1}q_z(1)$. Then, the closed-loop system is well-posed and asymptotically stable.
                \end{theorem}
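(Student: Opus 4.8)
The plan is to run the same two-step argument that proved Theorem~\ref{ch2:Thm:AStabz_FD_comp} --- a Lumer--Phillips well-posedness step followed by a LaSalle invariance step --- since \eqref{ch2:eq:linearizedVoss_FD_c2b_dyn} shares the internal dynamics and the energy functional structure of \eqref{ch2:eq:PDE_composite_FD}, only the boundary relations differing. First I would rewrite the last constraint in \eqref{ch2:eq:domain_current2b} in physical variables: with $x_6=\mu A^p\dot q$ and $x_3=q_z$ it reads $\dot q(1)+\kappa q_z(1)=0$, i.e.\ exactly the feedback $\dot q(1)=\mathcal{I}(t)$ with $\kappa=(A^p\beta(1-\tfrac{\gamma^2\beta}{C_A}))^{-1}$, which is positive because $C_A>\gamma^2\beta$. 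So the operator $\mathcal{A}_{\dot q}$ is \eqref{ch2:eq:closedloopgen} equipped with the three boundary relations of \eqref{ch2:eq:domain_current2b}, and $\langle\vect{x},\vect{x}\rangle_\mathcal{H}=2\mathcal{H}(t)$ is still the relevant inner product.

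For well-posedness I would integrate by parts in $\langle\mathcal{A}_{\dot q}\vect{x},\vect{x}\rangle_\mathcal{H}$ exactly as in Lemma~\ref{ch2:lem:FD_adjoint} and \eqref{ch2:eq:A_dissipative}: all interior contributions cancel and the boundary terms at $z=0$ vanish by the clamped conditions, while at $z=1$ the first two relations of \eqref{ch2:eq:domain_current2b} eliminate the mechanical efforts and the third substitutes $\dot q(1)=-\kappa q_z(1)$, leaving a single nonpositive term proportional to $-(q_z(1))^2$; the identical computation for $\mathcal{A}_{\dot q}^\ast$ gives the same bound, so both $\mathcal{A}_{\dot q}$ and $\mathcal{A}_{\dot q}^\ast$ are dissipative. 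By the Lumer--Phillips Theorem~\ref{ch2:theorem:Lumer-Phillips}, $\mathcal{A}_{\dot q}$ generates a contraction semigroup $T(t)$ on $\mathcal{X}_\mathcal{H}$ and the closed loop is well-posed.

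Next I would invoke LaSalle. Since $\Dom(\mathcal{A}_{\dot q})$ is cut out of $\mathcal{X}_\mathcal{H}$ by the clamped conditions and finitely many boundary constraints, the Sobolev embedding theorem gives a compact embedding $\Dom(\mathcal{A}_{\dot q})\hookrightarrow\mathcal{X}_\mathcal{H}$, hence $\mathcal{A}_{\dot q}$ has compact resolvent, every orbit $\tilde\lambda(\vect{x})$ is precompact, and the $\omega$-limit set is non-empty. With $\mathcal{V}=\mathcal{H}(t)$ as Lyapunov candidate and $\dot{\mathcal{V}}(\vect{x})\le 0$ from the previous step, Theorem~\ref{ch2:thm:LaSalleinv} applies, and it remains to show that the largest invariant subset $\mathcal{W}=\{\vect{x}\mid\dot{\mathcal{V}}(\vect{x})=0\}$ reduces to $\{\vect{0}\}$.

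The crux is this last point. On $\mathcal{W}$ the dissipation vanishes along the trajectory, so $q_z(1)\equiv 0$ for all $t\ge 0$, and the domain relation then forces $\dot q(1)\equiv 0$ as well, i.e.\ $x_6(1)\equiv 0$. Invariance of $\mathcal{W}$ under $T(t)$ then, as in the proof of Theorem~\ref{ch2:Thm:AStabz_FD_comp}, reduces the problem to solving $\mathcal{A}_{\dot q}\vect{\varphi}(z)=\vect{0}$ for $\vect{\varphi}\in\Dom(\mathcal{A}_{\dot q})$ subject to the overdetermining extra data $q_z(1)=\dot q(1)=0$ on top of the clamped conditions at $z=0$; this is a coupled constant-coefficient system in $(v,w_{z_1},q)$ which I would reduce to a homogeneous algebraic system whose determinant is nonzero precisely under Assumption~\ref{ch2:ass:sys_coef} together with the new hypothesis $C\neq\gamma^2\beta$. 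The hypothesis $C\neq\gamma^2\beta$ is exactly what rules out the degenerate balance between the reinforced stiffness $C$ and the piezoelectric coupling $\gamma^2\beta$ that the boundary-port feedback leaves undamped; it did not appear in Theorem~\ref{ch2:Thm:AStabz_FD_comp} because there the effort-type voltage boundary condition produced a different determinant. Concluding $\vect{\varphi}\equiv\vect{0}$, we get $\mathcal{W}=\{\vect{0}\}$ and LaSalle yields $\lim_{t\to\infty}d(T(t)\vect{x},\vect{0})=0$, i.e.\ asymptotic stability. I expect the determinant/unique-continuation computation in this final step to be the main obstacle: one must carry the coupling parameters $\gamma\beta$ and $I_0$ through the characteristic equation of the coupled ODE and verify that none of the exceptional parameter values excluded by Assumption~\ref{ch2:ass:sys_coef} and by $C\neq\gamma^2\beta$ are hidden there.
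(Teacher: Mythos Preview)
Your proposal is correct and follows essentially the same approach as the paper's own proof: Lumer--Phillips dissipativity of $\mathcal{A}_{\dot q}$ and $\mathcal{A}_{\dot q}^\ast$ (yielding the boundary dissipation $-\kappa(x_3(1))^2$), compact resolvent via Sobolev embedding to obtain precompact orbits, and LaSalle reduction to showing $\mathcal{A}_{\dot q}\vect{\varphi}=\vect{0}$ has only the trivial solution under Assumption~\ref{ch2:ass:sys_coef} and $C\neq\gamma^2\beta$. Your elaboration on the role of $C\neq\gamma^2\beta$ in the final determinant computation and the explicit identification of $\kappa$ from the control law are more detailed than the paper's terse treatment, but the structure is identical.
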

                \begin{proof}
                    The closed and densely defined operator $\mathcal{A}_{\dot{q}}$ and its adjoint $\mathcal{A}_{\dot{q}}^\ast$ are dissipative, i.e.
                      \begin{align*}
                            \angles{\mathcal{A}_{\dot{q}} \vect{x},\vect{x}}_X&=-\kappa (x_3(1))^2\leq 0, \\
                            \angles{\mathcal{A}_{\dot{q}}^*{\vect{x}},\vect{x}}_X&=-\kappa (x_3(1))^2\leq 0,
                      \end{align*} 
                      where we made use of the boundary condition from the second row of \eqref{ch2:eq:domain_current2b}, and computed the adjoint $\mathcal{A}_{\dot{q}}^\ast$ in a similar fashion as in Lemma \ref{ch2:lem:FD_adjoint}. Hence, by use of the Lummer-Phillips Theorem \ref{ch2:theorem:Lumer-Phillips}, the operator $\mathcal{A}_{\dot{q}}$ on the domain generates a strongly continuous semigroup of contractions and is well-posed.\\
                    Furthermore, from the Sobolev embedding Theorem \cite{Sobolevembedding2006a} and using \cite{Luo1999SSIDSA} (Theorem 3.65), we see that the orbit  $\tilde{\gamma}(\vect{x})$ is precompact.  It remains to show that the largest invariant subset 
                      $$\mathcal{W}=\{x \mid \dot{\mathcal{V}}(\vect{x})=0\},$$
                      with \blue{${q}_z(1)\equiv 0$} contains only the zero vector $\vect{0}$. Therefore, let Assumption \ref{ch2:ass:sys_coef} and the inequality $C\neq\gamma^2\beta$ hold and recall the boundary conditions $v(0)=w(0)=w_z(0)=q(0)=0$ to compute the solution to the ode
                        \begin{align}
                            \mathcal{A}\vect{\varphi}(z) = \vect{0}, 
                        \end{align}
                      is $\vect{\varphi}(z)\equiv\vect{0}$ for $\vect{\varphi}\in\Dom(\mathcal{A}_{\dot{q}})$. Hence, $\vect{0}\in \mathcal{W}$ is the only solution contained in $\mathcal{W}$. Then, by use of LaSalle's Invariance Principle Theorem \ref{ch2:thm:LaSalleinv}, we have that 
                      \begin{align}
                          \lim_{t\rightarrow\infty} d(T(t)\vect{x},\mathcal{W})=d(T(t)\vect{x},\vect{0})=0,
                      \end{align}
                      for all $\vect{x}\in \Dom(\mathcal{X}_{\dot{q}})$ and conclude that the closed-loop system of the current-through-the-boundary fully dynamic electromagnetic composite is asymptotically stable.
                \end{proof}

              
               \blue{ Furthermore, for completion, we show the asymptotic stability of the current controlled piezoelectric composite under static and quasi-static electric field assumption \eqref{ch2:eq:(Q-)Staticcomps}. Recall the corresponding energy functional $\mathcal{H}_{(Q)S}(t)$ \eqref{ch2:eq:Hamiltonian(Q-)Static_composite}, with the change of energy along the trajectories of \eqref{ch2:eq:(Q-)Staticcomps} as follows,}
                \begin{align}
                    \frac{d}{dt}\mathcal{H}_{(Q)S}(t)=-\gamma A^p V(t)\dot{v}(t)=-\kappa (\dot{v}(1))^2\leq 0,
                \end{align}
                for control choice $V(t)=\tfrac{\kappa}{\gamma A^p}\dot{v}(1)$. Then, $\mathcal{H}_{(Q)S}(t)$ is considered a Lyapunov candidate function. Let $\vect{x}=\col(v_z,w_{zz},\rho_A \dot{v}-\rho_{I_0}\dot{w}_z,\rho_{I}\dot{w}_z - \rho_{I_0}\dot{v})$ and denote the closed-loop systems as follows,
                \begin{align}
                    \begin{split}
                        \dot{\vect{x}}\mathcal{A}_{(Q)S}\vect{x}, \quad t> 0, \\
                        \vect{x}(0)=\vect{x}_0 \in\Dom(\mathcal{A}_{(Q)S}),
                    \end{split}
                \end{align}
                with the closed and densely defined operator
                \begin{align}\label{ch2:eq:QS_closedloop_comp}
                    \mathcal{A}_{(Q)S}\vect{x}=\begin{pmatrix} \tfrac{\rho_I}{\rho_A\rho_I-\rho_{I_0}^2}x_3' + \tfrac{\rho_{I_0}}{\rho_A\rho_I-\rho_{I_0}^2} x_4'\\ \tfrac{\rho_A}{\rho_A\rho_I-\rho_{I_0}^2} x_4' + \tfrac{\rho_{I_0}}{\rho_A\rho_I-\rho_{I_0}^2}x_3'    \\  
                    \bar{C}_A x_1'-\bar{C}_{I_0}x_2'\\
                    \bar{C}_Ix_2'-\bar{C}_{I_0}x_1' \end{pmatrix},
                \end{align}
                on the domain
                \begin{align}
                    \begin{split}
                        \Dom(\mathcal{A}_{(Q)S})=\{ \vect{x} \in \mathcal{X}_{(Q)S} &|  \\
                         \quad \bar{C}_A x_1(1)-\bar{C}_{I_0}x_2(1)&=0,\\
                         \quad \bar{C}_Ix_2(1)-\bar{C}_{I_0}x_1(1)+ \kappa \braces(1)&=0,\\ 
                        \quad A\beta x_3(1)-\gamma\beta(A x_1(1)-I_0 x_2(1))&\\
                       \quad+ \kappa \tfrac{\rho_I x_3(1)+\rho_A x_4(1)}{\rho_A\rho_I-\rho_{I_0}^2}&=0 \}. 
               \end{split}
                \end{align}
                Define the innerproduct $\langle\vect{x},\vect{x}\rangle_{(Q)S}=2\mathcal{H}_{(Q)S} (t)$ and assume the following inequalities on the system parameters.
           
           \begin{assumption}{System parameter inequalities for \eqref{ch2:eq:QS_closedloop_comp}}\label{ch2:ass:sys_coefQS}
                \begin{align*}
                    \begin{array}{rlrl}
                       \rho_A \rho_I &\neq \rho_{I_0}^2, \quad\quad   & \bar{C}_A \bar{C}_I &\neq \bar{C}_{I_0}^2. \\
                    \end{array}
                \end{align*}
           \end{assumption}

              \blue{Now we show the asymptotically stabilization of for both the static and quasi-static piezoelectric composite with the following Theorem. } 
                \begin{theorem}\label{ch2:Thm:AStabz_(Q)S_comp}
                   Let the inequalities in Assumption \ref{ch2:ass:sys_coefQS} hold and consider the closed-loop system \ref{ch2:eq:QS_closedloop_comp}. Furthermore, consider the Lyapunov candidate function $\mathcal{V} =  \mathcal{H}_{(Q)S}$.  Then, the closed-loop  (quasi-)static voltage-controlled piezoelectric composite is well-posed and asymptotically stable.
                \end{theorem}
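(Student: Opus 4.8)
The plan is to reproduce, with the obvious simplifications, the three-part argument used in the proof of Theorem~\ref{ch2:Thm:AStabz_FD_comp}: (i) well-posedness via Lumer--Phillips, (ii) precompactness of orbits via a compact-resolvent argument, and (iii) triviality of the largest invariant set, after which LaSalle's Invariance Principle (Theorem~\ref{ch2:thm:LaSalleinv}) delivers asymptotic stability. The only structural difference from the fully dynamic case is that the electromagnetic degrees of freedom are absent, so the state $\vect x=\col(v_z,w_{zz},\rho_A\dot v-\rho_{I_0}\dot w_z,\rho_I\dot w_z-\rho_{I_0}\dot v)$ lives in a four-component space $\mathcal{X}_{(Q)S}$ and the feedback enters through the velocity $\dot v(1)$ rather than through $\dot q(1)$.

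First I would verify that $\mathcal{A}_{(Q)S}$ in \eqref{ch2:eq:QS_closedloop_comp} and its adjoint are dissipative: integrating by parts and using the boundary conditions defining $\Dom(\mathcal{A}_{(Q)S})$ --- in particular the row carrying the $\kappa$ feedback --- gives $\angles{\mathcal{A}_{(Q)S}\vect x,\vect x}_{(Q)S}=-\kappa(\dot v(1))^2\le 0$, which is exactly the energy balance $\tfrac{d}{dt}\mathcal{H}_{(Q)S}=-\kappa(\dot v(1))^2$; the adjoint, computed as in Lemma~\ref{ch2:lem:FD_adjoint}, satisfies the same inequality. By Theorem~\ref{ch2:theorem:Lumer-Phillips}, $\mathcal{A}_{(Q)S}$ generates a strongly continuous semigroup of contractions on $\mathcal{X}_{(Q)S}$, so the closed loop is well-posed and $\mathcal{H}_{(Q)S}$ is a Lyapunov function. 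Then, as in the proof of Theorem~\ref{ch2:Thm:AStabz_FD_comp}, the Sobolev embedding theorem makes $\Dom(\mathcal{A}_{(Q)S})$ compactly embedded in $\mathcal{X}_{(Q)S}$, hence $(\lambda-\mathcal{A}_{(Q)S})^{-1}$ is compact on the resolvent set and, by \cite{Luo1999SSIDSA}(Theorem 3.65), every orbit $\tilde\gamma(\vect x)$ is precompact with nonempty $\omega$-limit set.

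The crux is to show that the largest invariant subset $\mathcal{W}=\{\vect x\mid\dot{\mathcal V}(\vect x)=0\}$ reduces to $\{\vect 0\}$. On $\mathcal{W}$ the dissipation vanishes, i.e. $\dot v(1)\equiv 0$, so every boundary condition becomes homogeneous and invariance forces the corresponding trajectory to solve $\mathcal{A}_{(Q)S}\vect\varphi=\vect 0$ with $\vect\varphi\in\Dom(\mathcal{A}_{(Q)S})$ and the extra constraint $\dot v(1)=0$. From the first two rows, $\rho_A\rho_I\neq\rho_{I_0}^2$ forces the velocity components to be spatially constant; from the last two rows, $\bar C_A\bar C_I\neq\bar C_{I_0}^2$ forces $v_z$ and $w_{zz}$ to be spatially constant; the clamped conditions $v(0)=w_z(0)=0$ together with the (now homogeneous) natural conditions at $z=1$ --- again an invertible system thanks to $\bar C_A\bar C_I\neq\bar C_{I_0}^2$ --- then kill these constants, and iterating the invariance (equivalently, using that $\mathcal{A}_{(Q)S}$ has purely discrete spectrum and that the observation $\dot v(1)\equiv 0$ annihilates every eigenfunction) gives $\vect\varphi\equiv\vect 0$. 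Hence $\mathcal{W}=\{\vect 0\}$, and Theorem~\ref{ch2:thm:LaSalleinv} yields $\lim_{t\to\infty}d(T(t)\vect x,\vect 0)=0$ for all $\vect x\in\Dom(\mathcal{A}_{(Q)S})$, which is the claimed asymptotic stability.

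I expect the main obstacle to be precisely this last step. Unlike the decoupled stretching beam \eqref{ch2:eq:(Q)staticbeams}, the cross-inertia $\rho_{I_0}$ and the cross-stiffness $\bar C_{I_0}$ couple the stretching and bending equations, so one must actually solve the coupled constant-coefficient two-point boundary value problem (or carry out the spectral/unique-continuation argument) and check that the clamped-plus-natural boundary data, reinforced by $\dot v(1)=0$, annihilate every mode; the non-degeneracy inequalities of Assumption~\ref{ch2:ass:sys_coefQS} are exactly what makes the relevant $2\times 2$ coefficient matrices invertible at each stage.
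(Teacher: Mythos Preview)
Your proposal is correct and follows essentially the same three-step argument as the paper's proof: dissipativity of $\mathcal{A}_{(Q)S}$ and its adjoint to invoke Lumer--Phillips, compact resolvent via Sobolev embedding together with \cite{Luo1999SSIDSA}(Theorem 3.65) for precompactness of orbits, and then showing $\mathcal{A}_{(Q)S}\vect\varphi=\vect 0$ has only the trivial solution in $\Dom(\mathcal{A}_{(Q)S})$ under Assumption~\ref{ch2:ass:sys_coefQS} before applying LaSalle. Your discussion of how the two inequalities in Assumption~\ref{ch2:ass:sys_coefQS} make the relevant $2\times 2$ blocks invertible is more explicit than what the paper writes, but the underlying argument is the same.
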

                \begin{proof}
                     The closed and densely defined operator $\mathcal{A}_{(Q)S}$ and its adjoint $\mathcal{A}_{(Q)S}^\ast$ are dissipative, i.e.
                      \begin{align*}
                            \angles{\mathcal{A}_{(Q)S} \vect{x},\vect{x}}_X&=-\kappa (\dot{v}(1))^2\leq 0, \\
                            \angles{\mathcal{A}_{(Q)S}^*{\vect{x}},\vect{x}}_X&=-\kappa (\dot{v}(1))^2\leq 0,
                      \end{align*} 
                      where we computed $\mathcal{A}_{(Q)S}^\ast$ in a similar fashion as in Lemma \ref{ch2:lem:FD_adjoint}. Hence, by use of the Lummer-Phillips Theorem \ref{ch2:theorem:Lumer-Phillips}, the operator $\mathcal{A}_{(Q)S}$ generates a strongly continuous semigroup of contractions and is well-posed.\\
                    Furthermore, from the Sobolev embedding Theorem \cite{Sobolevembedding2006a} and using \cite{Luo1999SSIDSA}(Theorem 3.65 ), we see that the orbit  $\tilde{\gamma}(\vect{x})$ is precompact.  It remains to show that the largest invariant subset 
                      $$\mathcal{W}=\{x \mid \dot{\mathcal{V}}(\vect{x})=0\},$$
                      with $\dot{v}(1)\equiv 0$ contains only the zero vector $\vect{0}$. Therefore, let Assumption \ref{ch2:ass:sys_coefQS} hold and recall the boundary conditions $v(0)=w(0)=w_z(0)=0$ and compute that the solution to the ode
                        \begin{align}
                            \mathcal{A}_{(Q)S}\vect{\varphi}(z) = \vect{0}, 
                        \end{align}
                      is $\vect{\varphi}(z)\equiv\vect{0}$ for $\vect{\varphi}\in\Dom(\mathcal{A}_{(Q)S})$. Hence, $\vect{0}\in \mathcal{W}$ is the only solution contained in $\mathcal{W}$. Then, by use of LaSalle's Invariance Principle Theorem \ref{ch2:thm:LaSalleinv}, we have that 
                      \begin{align}
                          \lim_{t\rightarrow\infty} d(T(t)\vect{x},\mathcal{W})=d(T(t)\vect{x},\vect{0})=0,
                      \end{align}
                      for all $\vect{x}\in \Dom(\mathcal{X}_{(Q)S})$ and conclude that the closed-loop system is asymptotically stable. 
                \end{proof}
                
                \blue{In this section, we have shown that the voltage-actuated piezoelectric composite is asymptotically stabilizable under any of the three electromagnetic assumptions. For simulation and control purposes, we include simulations of the voltage-controlled fully dynamic electromagnetic piezoelectric composite in the following section. }  

            \section{Simulation results of the asymptotically stabilizing piezoelectric composite}\label{ch2:sec:simulations}
            
                 For simulation purposes, we consider the fully dynamic electromagnetic piezoelectric composite with the top layer being a \ac{PZT}-5\footnote{\text{https://support.piezo.com/article/62-material-properties}} piezoelectric layer and the mechanical substrate (with centroidal coordinates) is a steel-304\footnote{https://support.piezo.com/article/62-material-properties\#pack} mechanical layer of the same dimensions. An overview of the system parameters is given in Table \ref{ch2:tab:symbols}. The open-loop and closed-loop simulations, by closing the loop in a standard passive manner \cite{Jacobzwart2012}, are obtained using the structure-preserving spatial discretization method \cite{GoloSchaft2004} with N=20 segments. The time-discretization is complimented with the variable-step ode23s-solver (build-in Matlab\textsuperscript{\textregistered} solver).  Furthermore, to overcome difficulties with the spatial discretization of (piezoelectric) models using a mixed Finite-Element method \cite{GoloSchaft2004}, mentioned in \cite{voss2010port}, we use a trapezoidal spatial integration and time integration to compute the longitudinal $(v(z,t))$ and transversal deflection $w(z,t)$, respectively. \\
                In Fig  \ref{ch2:fig:sim_openloop}, the open-loop dynamics for the transverse tip deflection is presented after the actuation with 500V for two seconds. The asymptotic behaviour of the transverse tip deflection of the closed-loop system $\eqref{ch2:eq:closedloop_sys}$ with $\kappa=10$ is shown in Fig \ref{ch2:eq:closedloop_sys}, where it can be seen that the transversal tip deflection is mitigated. Furthermore, in Fig \ref{ch2:fig:controlled_statetraj}, the state-trajectories of the controlled piezoelectric composite are presented, which go to zero.

             \begin{figure}[ht]
         				\centering
            	    	\includegraphics[width=\columnwidth]{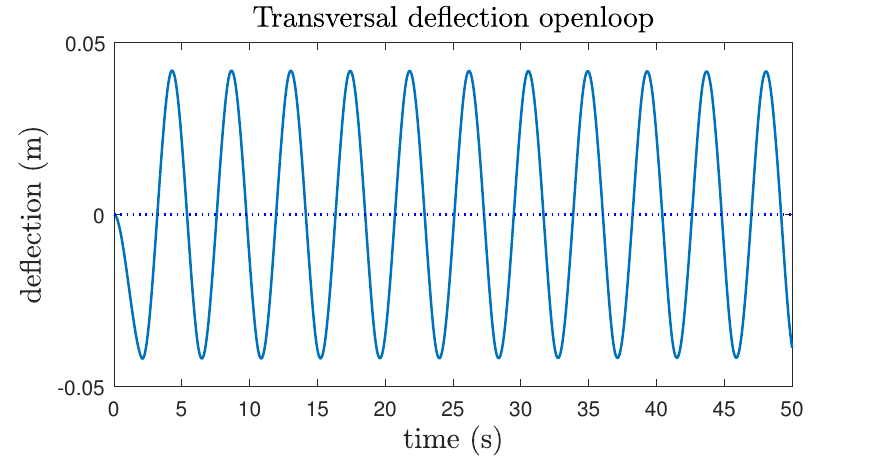}
            				\caption[Open-loop tip deflection of voltage-actuated piezoelectric composite]{Open-loop tip deflection $(w(1))$ of the piezoelectric composite. The piezoelectric composite is actuated with 500 V for the first two seconds and reaches a deflection of four cm at the tip. After two second, the actuation is stopped and it can be seen that the tip of the composite moves back to zero and continues to vibrate in a regular fashion.}
                \label{ch2:fig:sim_openloop}
            \end{figure}
              \begin{figure}[ht]
            				\centering
            				\includegraphics[width=\columnwidth]{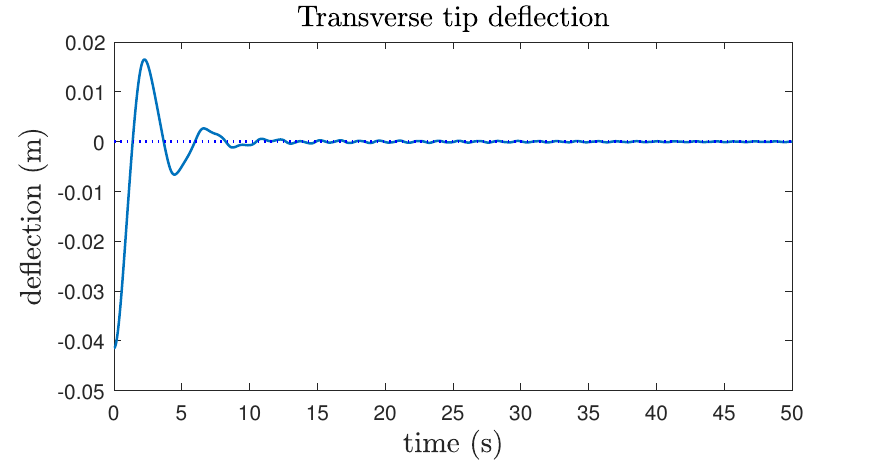}
            				\caption[Closed-loop tip deflection of voltage-controlled piezoelectric composite]{Closed-loop tip deflection $(w(z))$ - with $\kappa = 10$ after being actuated with 500 V for the first two seconds. Then the asymptotic behaviour of the controlled piezoelectric composite can be seen.}
            			\label{ch2:fig:sim_closedloop}
        	\end{figure}

             \begin{figure}%
                \centering
                    {{\includegraphics[width=0.99\columnwidth]{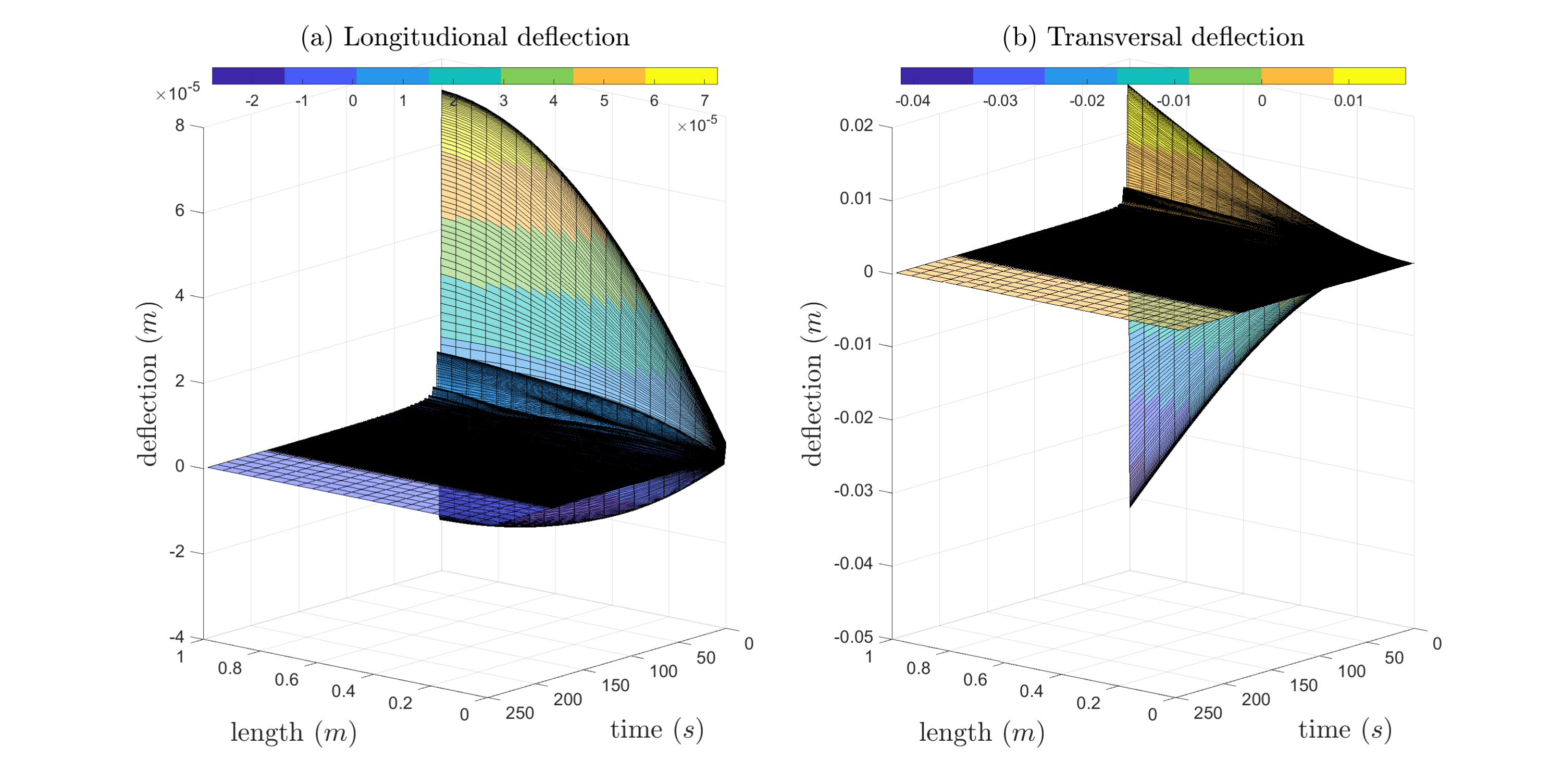} }}%
                    \quad
                    {{ \includegraphics[width=0.99\columnwidth]{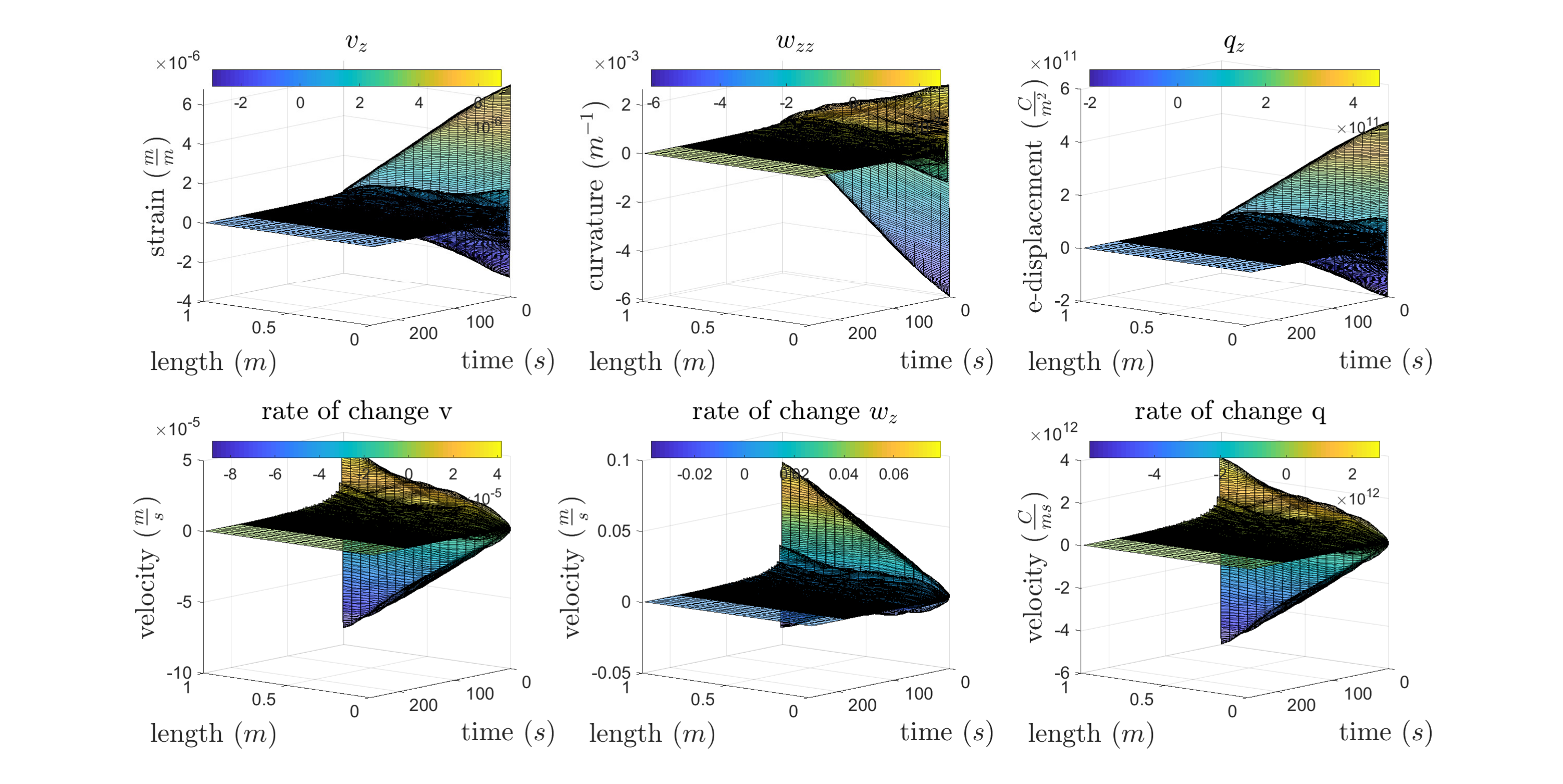} }}%
                 \caption[Closed-loop response form the electrical feedback with $\kappa=10$.]{In (a) the asymptotic stabilizing behaviour of the voltage-controlled longitudinal $(v(z))$ and transverse displacement $(w(z)$ of the complete composite is shown, for $\kappa=10$. In (b), the state trajectories (including velocities) of the voltage-controlled piezoelectric composite are shown to go to zero, for $\kappa=10$.}
        		  \label{ch2:fig:controlled_statetraj}
        \end{figure}

               
            %
             \begin{table}
                    \centering
                    \caption{System parameters}
                    {\begin{tabular}{lll}
                        \toprule
                            Geometry & Description& Value\\
                        \midrule
                            $2g_b$  & Layer width & $0,1 ~m$ \\
                            $h_b-h_a$ & Layer thicness & $0,01 ~m$\\
                        \midrule
                             Piezo parameters     \\
                        \midrule
                            $\rho_p$    &  mass-density     & $7950 ~\frac{\text{kg}}{\text{m}^3}$\\
                            $C_p$         & Stiffness         & $66\times10^9 ~\frac{\text{N}}{\text{m}^2}$\\
                            $\gamma$    &   Coupling coefficient &  $12.54  ~{\frac{\text{C}}{\text{m}^2}}$\\
                            $\beta$     & Impermittivity & $10^{-6}~ \frac{m}{F}$ \\
                            $\mu$ & Magnetic permeability & $1.2 \times 10^{-6} ~\frac{H}{m}$  \\
                            \midrule
                            Substrate parameters &  &  \\
                            \midrule
                            $\rho_s$    &  mass-density     & $8000v ~\frac{\text{kg}}{\text{m}^3}$\\
                            $C_s$       & Stiffness         & $193\times10^9  ~\frac{\text{N}}{\text{m}^2}$\\
                        \bottomrule
                        \end{tabular}}
                    \label{ch2:tab:symbols}
                \end{table}


    \section{Conclusion and future work}\label{ch2:sec:DF}
       
       \blue{ In this work, we propose a new well-posed voltage-actuated piezoelectric actuator and composite model with a fully dynamic electromagnetic field and show that the piezoelectric composite is asymptotically stabilizable for certain system parameters. Furthermore, we show that for certain system parameters, the composites under static and quasi-static electric field assumptions are also asymptotically stabilizable. Therefore, we claim that the voltage-controlled piezoelectric composites under any electromagnetic assumption are stabilizable for certain system parameters. Furthermore, we show that the non-linear \textit{current-through-the-boundary}-controlled piezoelectric composite proposed in \cite{VenSSIAM2014} that exploits the boundary ports in the \ac{pH}-formalism is of the same family of the voltage-controlled piezoelectric systems. The spatial discretization of this particular model satisfies Brocket's necessary condition for stabilizability \cite{VenSSIAM2014}, and we improved upon this by showing the asymptotic stabilizability for the infinite-dimensional model in the linear case for certain system parameters. Furthermore, we show that the piezoelectric beam is a special case of the piezoelectric actuator by neglecting the rotational inertia (by means of centroidal coordinates). We show that the piezoelectric actuator is useful for developing piezoelectric composite models, where a piezoelectric actuator is bonded to a mechanical substrate, as opposed to the piezoelectric beam, which requires a redo of the model derivation as the rotational inertias are neglected in the strain expressions. Difficulties with the spatial discretization of (piezoelectric) models using a mixed Finite-Element method \cite{GoloSchaft2004}, mentioned in \cite{voss2010port}, are solved by the use of a trapezoidal spatial integration and time integration to compute the longitudinal and transverse deflection. A full connection with the various current controlled models, e.g. \cite{OzerTAC2019} and \cite{VenSSIAM2014}, remains and is part of future research.}
        
        

    \section*{Acknowlegdement}
        {We would like to acknowledge Kirsten A. Morris for her insights and suggestions during the development of this work.}

    \bibliographystyle{IEEEtran}
	
	
\end{document}